\tikzset{
	closed/.style = {decoration = {markings, mark = at position 0.5 with { \node[transform shape, xscale = .8, yscale=.4] {/}; } }, postaction = {decorate} },
	open/.style = {decoration = {markings, mark = at position 0.5 with { \node[transform shape, scale = .7] {$\circ$}; } }, postaction = {decorate} },
	commutative diagrams/.cd,
	mysymbol/.style={start anchor=center, end anchor=center,draw=none}
}
\newtheorem{thmx}{Theorem}
\newtheorem{propx}{Proposition}
\newtheorem{corollary}{Corollary}[section]
\newtheorem{theorem}[corollary]{Theorem}
\newtheorem{lemma}[corollary]{Lemma}
\newtheorem{proposition}[corollary]{Proposition}
\theoremstyle{definition}
\newtheorem{convention}[corollary]{Convention}
\newtheorem{definition}[corollary]{Definition}
\newtheorem{notation}[corollary]{Notation}
\newtheorem{propositiondefinition}[corollary]{Proposition/Definition}
\newtheorem{remark}[corollary]{Remark}
\newtheorem{example}[corollary]{Example}
\newcommand{\id}{\operatorname{Id}}
\newcommand{\pp}{\mathfrak{p}} 
\newcommand{\frakc}{\mathfrak{c}}
\newcommand{\Mod}{\mathrm{Mod}} 
\newcommand{\SMod}{\mathrm{SMod}} 
\newcommand{\Ring}{\mathrm{Ring}} 
\newcommand{\SRing}{\mathrm{SRing}} 
\newcommand{\Set}{\mathrm{Set}} 
\newcommand{\Alg}{\mathrm{Alg}} 
\newcommand{\B}{\mathbb{B}} 
\newcommand{\N}{\mathbb{N}} 
\newcommand{\Q}{\mathbb{Q}} 
\newcommand{\R}{\mathbb{R}} 
\newcommand{\Z}{\mathbb{Z}} 
\newcommand{\Cong}{\mathrm{Cong}}
\newcommand{\calC}{\mathcal{C}}
\newcommand{\calF}{\mathcal{F}}
\newcommand{\calG}{\mathcal{G}}
\newcommand{\calO}{\mathcal{O}}
\newcommand{\calU}{\mathcal{U}}
\newcommand{\idm}{\mathrm{Idm}}
\newcommand{\Spec}{\mathrm{Spec}} 
\title{Idempotentization of Affine Schemes and Sheaves}
\author{F\'elix Baril Boudreau$^{\ast}$}
\address{Universit\'e du Luxembourg, D\'epartement de Math\'ematiques, Maison du nombre, 6, avenue de la Fonte, Esch-sur-Alzette, 4364, Luxembourg}
\email{felix.barilboudreau@uni.lu}
\thanks{$^{\ast}$Corresponding author.}
\author{Cristhian Garay L\'opez}
\address{Centro de Investigaci\'on en Matem\'aticas, A.C. (CIMAT). Jalisco S/N, Col. Valenciana CP. 36023 Guanajuato, Gto, M\'exico}
\email{cristhian.garay@cimat.mx}
\begin{document}
	
	\keywords{Idempotent Semirings and Idempotent Semimodules, k-Ideals, Subtractive  Semimodules, Non-Archimedean Seminorms,  Sheaf idempotentization, Scheme idempotentization, Lattice-Ordered Algebraic Structures}
	
	\subjclass[2020]{16Y60, 14T10, 06A11, 22A26}
	
	\begin{abstract}
		In this article, we introduce the idempotentization process, which bears some philosophical and mathematical similarities with modern analytification and tropicalization. Idempotentization associates to any affine scheme an idempotent version of itself with respect to a fixed covering by distinguished affine open subschemes. Once this cover is fixed, we can functorially associate a Zariski sheaf of rings or modules to a sheaf of idempotent semiring or a sheaf of idempotent semimodules. We show that idempotentization is independent of the chosen cover and in the Noetherian case, the idempotentization of the structure sheaf recovers the global sections. Underlying our formalism is a combinatorial reflection of lattices of subobjects of ordered-theoretic objects seen as lattices coming from commutative algebra. This has topological consequences for the semiring of subtractive ideals of a commutative semiring $S$: On one hand, it is a topological retract of the semiring of congruence relations of $S$ for the coarse lower topology. On the other hand, it is a topological retract of the semiring of ideals of $S$ for the coarse upper topology.
	\end{abstract}
	\maketitle
	
	\section{Introduction}
	This article is the first step in exploring the idea of transforming algebraic geometry objects into idempotent semi-algebraic-geometry-like objects and studying their properties in an \emph{idempotentized} framework.  Our approach stems from exploiting correspondences between order-theoretic structures and idempotent algebraic structures and from considering the structure of certain non-Archimedean seminorms associated with commutative rings and modules over them. As a result, giving any affine scheme with a fixed covering by distinguished affine open subschemes, we \emph{idempotentize} the scheme and functorially idempotentize any Zariski sheaf of rings or modules defined over it, with respect to that data.
	
	\subsection{Overview of the Main Results}\label{Subsection_Overview}
	For a commutative ring $R$, and a module $M$ over it,  we write $\Mod(M, R)$ and $\Mod(M, R)^c$ for the set of sub-$R$-modules of $M$ and for its subset of those that are finitely generated. In particular, we set $\id(R) := \Mod(R,R)$ and $\id(R)^c := \Mod(R,R)^c$. These two sets can be endowed with structures of semirings with respect to the sum and product of ideals. They are  \emph{idempotent} because any ideal $I$ of $R$ satisfies $I + I = I$ and are in fact \emph{simple} because $I + R = R$. We consider on $\id(R)$ and on its subset $\id(R)^c$ the order $a \leq b$ given by $a + b = b$. This makes the map $u_R: R \to \id(R)^c$ that sends $a \in R$ to the principal ideal $R \cdot a$ into an integral seminorm, that is, it for each $a \in R$, we have $u_R(a) \leq R$. J. Giansiracusa and N. Giansiracusa \cite{Giansiracusa_Giansiracusa_2016}*{Proposition 2.5.4} and Macpherson \cite{Macpherson_2020}*{p.449, Example 3.16} observed that every seminorm that takes values in an idempotent semiring factors uniquely through $u_R$, making the latter the universal one. They deduced that the set-valued functor from the category of idempotent semirings that sends an idempotent semiring $S$ to the set of integral seminorms $R \to S$ is represented by $\id(R)^c$. A ``tropical'' analogue is also obtained in \cite{Jun_Mincheva_Tolliver_2024}*{Proposition 6.7}.
	
	We consider the set $\Mod(M, R)^c$ endowed with the structure of idempotent semimodule on $\id(R)^c$ coming from the multiplication of ideals on modules and obtain a universal seminorm in the following sense. If $v$ is a normed ring from a ring $R$ to a semiring $S$, $M$ an $R$-module and $N$ an $S$-semimodule, we say, following Berkovitch \cite{Berkovich_1990}*{p. 12}, that a seminorm of Abelian groups $w : M \to N$ is a \emph{$v$-seminorm} if there is a non-zero $C \in S$ such that
	$$
	w(r \cdot m) \leq C v(r) \cdot w(m),
	$$
	for all $r \in R$ and $m \in M$. The map $u_M : M \to \Mod(M,R)^c$ that sends $m \in M$ to the cyclic $R$-submodule $R \cdot m$ of $M$ is a $u_R$-norm. Indeed, for all $r \in R$ and $m \in M$, we have $u_M(r \cdot m) = u_R(r)u_M(m)$. The map $u_M$ is the universal $u_R$-seminorm. More precisely, we show the following (see Proposition \ref{prop_snorm_module}).
	\begin{propx}
		For an $R$-module $M$, the  map $u_M$ is an $u_R$-norm satisfying the following properties:\begin{enumerate}
			\item[(i)] Any $u_R$-seminorm on $M$ factors uniquely through $u_M$.
			\item[(ii)] If $f:M_1\xrightarrow{}M_2$ is an $R$-module morphism and $\Mod^c(f) : \Mod(M,R_1)^c \to \Mod(M,R_2)^c$ is the induced map, then $u_{M_2} \circ f = \Mod^c(f) \circ u_{M_1}$.
			\item[(iii)] For an $R$-module $M$, the functor $\text{sn}_{u_R}$ from the category of $\id(R)^c$-semimodules to the category of sets that sends a $\id(R)^c$-semimodule $N$ to the set  $\text{sn}_{u_R}(N)=\{u_R-\text{seminorms } w: M \to N\}$ is represented by $\Mod(M,R)^c$.
		\end{enumerate}
	\end{propx}
	
	Recall that the $u_R$-norm $u_M$ yields an ordered set bijection between the ordered set $\mathrm{Mod}(M)$ of $R$-submodules of $M$ and the ordered set $\mathrm{SMod}_k(\Mod(M,R)^c)$ of subtractive semimodules (or $k$-semimodules) of $\Mod(M,R)^c$ (see Proposition \ref{Correspondence_Modules_over_a_Ring}). Order theory implies in the case $M = R$, the above ordered bijection preserves primary ideals and radicals (see Propositions \ref{Correspondence_Ideals_Preserves_Primes} and \ref{proposition_radical_ideals}). This plays a role in our study of the prime $k$-spectrum $\Spec_k(\id(R)^c)$ of $\id(R)^c$. 
	
	We also make the following apparently new observations. Let $S$ be a semiring, and consider its semiring of ideals $\id(S)$, its semiring of $k$-ideals $\id_k(S)$ and its semiring of semiring congruence relations $Cong(S)$. These are posets for the set inclusion $\subseteq$.
	
	\begin{thmx}[Theorem \ref{Retraction_Theorem}]
		Endow the posets $(Cong(S), \subseteq)$ and $(Id_k(S), \subseteq)$ with the coarse lower topology, and define maps
		\[
		\begin{tikzcd}
			\left( Id_k\left( S \right), \subseteq \right) \arrow[rr, "\mathfrak{c}", shift left=4] &  & \left( Cong\left( S \right), \subseteq \right) \arrow[ll, "\mathfrak{r}", shift left=2],
		\end{tikzcd}
		\]
		by setting $\frakc(I)$ to be the semiring congruence generated by $\{ (a, 0) \mid a \in I \}$ for $I \in Id_k\left( S \right)$, and for $Y$ in $ Cong(S)$, we let $\mathfrak{r}(Y)$ be set $\{ a \in S : (a,0) \in Y\}$ . Then the map $\mathfrak{r}$ is a retraction of topological spaces.
	\end{thmx}
	
	\begin{thmx}[Theorem \ref{Retraction_Theorem_ideales}]
		Endow the posets $(Id(S), \subseteq)$ and $(Id_k(S), \subseteq)$ with the coarse upper topology, and define maps
		\[
		\begin{tikzcd}
			(Id_k\left( S \right),\subseteq) \arrow[rr, "\mathfrak{i}", shift left=4] &  &(Id\left( S \right),\subseteq)  \arrow[ll, "\mathfrak{j}", shift left=2],
		\end{tikzcd}
		\]
		by writing $\mathfrak{i}$ for the set theoretical inclusion and setting $\mathfrak{j}(I) = \langle I \rangle_k$ for the $k$-closure of $I \in Id(S)$. Then, $\mathfrak{j}$ is surjective and a retraction of topological spaces.
	\end{thmx}
	
	In Section \ref{Section_Idempotentization}, we build functors that allow us to transform schemes and sheaves of them into idempotent (and thus combinatorial) versions of themselves. We summarize the main features of this section below.
	
	\begin{thmx}
		Let $X$ be a scheme and let $\calF$ be a presheaf of rings on $X$, respectively of $\calO_X$-modules. Then, there is a sheaf $\Theta(\calF)$ of $\mathbb{B}$-algebras on $X$, respectively of $\Theta(\calO_X)$-semimodules. Now, suppose that $X = \Spec(R)$ is an affine scheme. Then,
		\begin{enumerate}
			\item[(i)] Its $R$-idempotentization is a pair comprised of the topological space $\Spec_k(\id(R)^c)$ and of a structure sheaf $\calO_{\Spec_k(\id(R)^c)}$ of $\mathbb{B}$-algebras.
			\item[(ii)] If $R$ is Noetherian, then $\id(R) = \id(R)^c$ and the global sections of $\calO_{\Spec_k(\id(R))}$ are $\id(R)$.
			\item[(iii)] Sheaf of rings and of modules over $\Spec(R)$ can be idempotentized with respect to any affine covering $\calU$ by distinguished open subsets of $\Spec(R)$. Moreover, the idempotentization is independent of the choice of $\calU$ and is functorial both with respect to the sheaves and to $\calU$.
		\end{enumerate}
	\end{thmx}
	
	\subsection{Related Works}
	In classical tropical geometry, one consider non-trivial valuations from an algebraically closed field $K$ to the tropical semiring $\mathbb{T}$ or more generally, as initiated by Aroca \cite{Aroca_2010}, to idempotent semirings of the form $\Gamma \cup \{\infty\}$, where $\Gamma$ is a totally ordered group. Since these semirings are never simple, while the semirings $\id(R)^c$ always are, the idempotentization approach is distinct from the classical one.
	
	The definition and study of the prime spectrum of $k$-ideals, which goes back to Lescot in (\cite{Lescot_2012}), is ubiquitous in our work since the $R$-idempotentization of an affine scheme is always the prime spectrum of $k$-ideals of some idempotent semiring. 
	
	In \cite{Rump_Yang_2008}*{Corollary 3}, Rump and Yang prove a conjecture of Anderson (see \cite{Anderson_1989}), which refines the Jaffard-Kaplansky-Ohm correspondence. They then extend the correspondence to sheaves on spectral spaces and prove that the spectrum of a B\'ezout domain and the spectrum of its corresponding Abelian $\ell$-group are in Hochster duality. Idempotentization allows a reinterpretation of one of their central constructions (see Examples \ref{Example_Bezout} and \ref{Example_Sheaf_Reinterpretation_Rump_Yang}). In connection with Hochster duality, Jun and al. proved in \cite{Jun_Ray_Tolliver_2022}*{Theorem 3.43} that any spectral space is homeomorphic to the prime $k$-spectrum of an idempotent semiring.
	
	In that work, the authors also consider the interplay between $k$-ideals and congruences in the spaces they build, but not study seek to determine the existence of any topological retractions as we do in our Theorems \ref{Retraction_Theorem} and \ref{Retraction_Theorem_ideales}. Congruences also appear in the study of supertropical structures by Izhakian and Rowen in \cite{Izhakian_Rowen_2016}, where they establish the Zariski correspondence between congruences of tropical polynomials and algebraic sets.
	
	Now, suppose that $K$ is an algebraically closed field that is complete with respect to some non-trivial non-Archimedean valuation to $\R^\times$. Let $X$ be a quasi-projective variety defined over $K$ together with a closed embedding into a projective toric variety $Y$. Payne defines the tropicalization of $X$ as the closure of the image of $X(K)$ in the tropicalization of $Y$ and then a tropicalization functor from toric embeddings to topological spaces. He proves that the Berkovitch analytification of $X$ is homeomorphic to the projective limit of the system formed by all the tropicalizations of $X$ with respect to these toric embeddings (see \cite{Payne_2009}*{Theorem 4.2}. Kajiwara defines a similar tropicalization functor in \cite{Kajiwara_2008}.
	
	Yue Ren (private communication) asked the first author if the idempotentization process could be made independent of the covering by using colimits as in Payne's work \cite{Payne_2009}. This question inspired us to prove that our idempotentization was independent of the choice of cover by distinguished open affine subsets (see Subsection \ref{Subsection_Step5}).
	
	In \cite{Giansiracusa_Giansiracusa_2016}*{Section 6}, J. Giansiracusa and N. Giansiracusa developed the notion of scheme-theoretic tropicalization of a locally integral $\mathbb{F}_1$-scheme with respect to a valuation $v$ from a ring $R$ into an idempotent semiring $S$ (see \cite{Giansiracusa_Giansiracusa_2016}*{Definition 2.5.1}). This involves the so-called bend relations which are themselves (see \cite{Giansiracusa_Giansiracusa_2016}*{Proposition 6.1.1 and Remark 6.1.2}) defined with respect to the \emph{tropical ideals} introduced by Maclagan and Rinc\'on in \cite{Maclagan_Rincon_2018}*{Definition 1.1}. Building on this work, they define in \cite{Giansiracusa_Giansiracusa_2022}, for a scheme $X$ defined over a ring $R$, the universal embedding $X \hookrightarrow \widehat{X}$, which is universal among embeddings over $\Spec(R)$ into schemes equipped with a locally integral model over $\mathbb{F}_1$. They define the \textit{universal tropicalization of $X$} as the scheme-theoretic tropicalization of $X$ with respect to the canonical closed embedding $X \to \widehat{X}$ (see \cite{Giansiracusa_Giansiracusa_2022}*{Definition 3.5.1}). As an example of scheme-theoretic tropicalization, consider a field $K$ with a surjective valuation $v: K \to \mathbb{T}$ of valuation ring $\calO_v$. The universal property of the monoid algebra $K[R]$ applied to the identity map $1_R$ gives a surjective morphism $K[R] \to R$. The scheme-theoretic tropicalization of $\Spec(R)$ is $\Spec(\Mod(R,\calO_v)^c)$, where $\Mod(R,\calO_v)^c$ is the semiring of finitely generated $\calO_v$-submodules of $R$ (see \cite{Jun_Mincheva_Tolliver_2024}*{Examples 6.12 and 6.13}).
	
	The scheme-theoretical tropicalization of an affine scheme has similarities with the idempotentization of the topological space of that scheme. However, novelties and special features of our work are that we can idempotentize sheaves over an affine scheme and resulting objects are sheaves that have explicit descriptions.
	
	In \cite{Macpherson_2020}, Macpherson describes an algebraic-geometry-type theory of skeleta to study in a unified setting tropical varieties, skeleta of non-Archimedean analytic spaces, and affine manifolds with singularities. Besides having different motivations and main results, we note that most of Macpherson's article focuses on the case where the $R$-algebra $M$ is a non-Archimedean ring. He introduces his notion of adic space, which is a modified version of the concept introduced by Fujiwara and Kato \cite{Fujiwara_Kato_2018}*{\S II.2} in a way, as he mentions, that his adic spaces generalize formal schemes (see \cite{Macpherson_2020}*{p.441, Section 2.2}). In the direction of tropical adic spaces, Friedenberg and Mincheva focus in \cite{Friedenberg_Mincheva_2024},  on building a space out of congruences on an idempotent semiring. In terms of valuations, they consider those valued in totally ordered semifields that are not isomorphic to the Boolean semifield $\mathbb{B}$.
	
	In \cite{Lorscheid_2023}, Lorscheid unifies key aspects of the above works of Payne and Kajiwara, J. and N. Giansiracusa and Macpherson using his theory of ordered blueprints and ordered blue schemes. For example, he observes (see \cite{Lorscheid_2023}*{Subsections 13.1 and 13.2}) that the universal tropicalization of \cite{Giansiracusa_Giansiracusa_2022} and the analytification approach of \cite{Macpherson_2020} are essentially equivalent.
	
	\subsection{Structure of this Article}
	In Section \ref{Section_Prerequisites} we introduce the necessary background material about semirings, semimodules and order theory and illustrate it with relevant examples. We conclude that section with definitions and examples of non-Archimedean seminorms. Then, Section \ref{Section_Universal} discusses the universal valuation and our proof of the corresponding universal seminorm result. We also use the correspondence between submodules of a module $M$ and $k$-subsemimodules of the idempotentization of $M$ to recover algebraic consequences and their reinterpretations for the Zariski topology. We also prove our topological retraction theorems. We finish with Section \ref{Section_Idempotentization} where we describe the idempotentization of sheaves.
	
	
	\section{Prerequisites}\label{Section_Prerequisites}
	
	In this section, we recall the necessary background material and in particular introduce the basic objects that we study in this paper. The first subsection introduces definitions and properties of semirings and semimodules over them. In conjunction with this, we recall basic notions of order theory. All these concepts will be illustrated with relevant examples for the with comes in later sections. In the second subsection, we briefly recall properties of non-Archimedean seminorms defined over Abelian groups, rings and modules over normed rings. No new results are proven in that section. Readers acquainted with the theory might want to skim through it to be aware of the notation we use and our conventions and then go directly to the next section.
	\begin{notation}
		In this paper, the following symbols always have the same meaning. We write $R$ for a commutative ring with one, $M$ for an $R$-module, $S$ for a commutative semiring with one, $N$ for an $S$-semimodule, $\langle \mathcal{S} \rangle$ for the ideal or (semi)module generated by a set $\mathcal{S}$, $\B$ for the Boolean semiring $\{0,1\}$, $\N$: the additive monoid of nonnegative integers, $\Z$ for the ring of integers, $\Q$ for the field of rational number and $S-\Alg$ for the category of $S$-algebras, with $S \in \{ \B, \N \}$.
	\end{notation}
	
	\subsection{Semistructures and Order Theory}\label{Subsection_Semistructures_and_Order_Theory}
	
	Many references exist on semistructures and order theory. For example, the reader can consult \cite{Birkhoff_1948} for the former and \cite{Golan_1999} for the latter. All our structures (monoid, semiring, semimodule) are commutative and we will not mention it again to simplify the text.
	
	\begin{definition}
		A \emph{monoid} is a pair $(T, \star)$, where $T$ is a non-empty set together with an associative and commutative operation $\star$ and an element $e \in T$ satisfying $t \star e = t = e \star t$ for all $t \in T$, called \emph{identity} element. An element $t \in T$ is said to be idempotent if $t \star t = t$. The monoid $(T,\star)$ is said to be \emph{idempotent} if all its elements are idempotent.
	\end{definition}
	The element $e$ of $T$ can be seen to be unique.
	
	\begin{definition}
		A monoid $(T,\star)$ is said to be partially ordered if there is a partial order $\leq$ defined on $T$ such that for $t_1, t_2, t_3 \in T$, the condition $t_1 \leq t_2$ implies that $t_1 \star t_3 \leq t_2 \star t_3$ holds.
	\end{definition}
	\begin{definition}
		A \emph{semiring} is a non-empty set $S$ equipped with an addition operation $+$ and a multiplication operation $\cdot$ satisfying the requirements that the pair $(S,+)$ is a commutative monoid with identity element $0_S$, the pair $(S,\cdot)$ is a commutative monoid with identity element $1_S$, multiplication distributes over addition and for all $s \in S$, we have $s0 = 0_S$. We write $0$ and $1$ for $0_S$ and $1_S$ if there is no risk of ambiguity.
	\end{definition}
	
	\begin{definition}
		If $S$ is a semiring. An $S$-\emph{semimodule} is a monoid $(M,+)$ with additive identity $0_M$ for which there is a map $S \times M \to M$ sending a pair $(s,m)$ to $sm$ which satisfies the following conditions for all $s_1, s_2 \in S$ and $m_1, m_2 \in M$. We have $(s_1 s_2)m = s_1(s_2 m)$, $s(m_1 + m_2) = sm_1 + sm_2$, $(s_1 + s_2)m = s_1m + s_2m$, $1_S m = m$ and $s 0_M = 0_M = 0_S m$.
	\end{definition}
	
	A semiring $S$, respectively a semimodule $M$ over such semiring, is said to be \emph{additively-idempotent} (or \emph{idempotent} for short) if the additive monoid $(S,+)$, respectively $(M,+)$ is idempotent. Note that if the semiring $S$ is idempotent, then the $S$-semimodule $M$ is idempotent as well. A semiring $S$ is simple if for all $x \in S$ we have $x + 1_S = 1_S$.
	
	\begin{definition}
		If $S_1$ and $S_2$ are semirings, then a map $f: S_1 \to S_2$ is said to be a \emph{semiring morphism} if $f(0_{S_1}) = 0_{S_2}$, $f(1_{S_1}) = 1_{S_2}$ and for all $s_1, s_2 \in S_1$ we have $f(s_1 + s_2) = f(s_1) + f(s_2)$ and $f(s_1 s_2) = f(s_1) f(s_2)$.
	\end{definition}
	
	\begin{definition}
		If $S$ is a semiring and $M_1$ and $M_2$ are $S$-semimodules, then an $S$-semimodule morphism is a map $f: M_1 \to M_2$ such that for any $s \in S$ and any $m_1, m_2 \in M_1$ and $s \in S$ we have $f(m_1 + m_2) = f(m_1) + f(m_2)$ and $f(s m_1) = sf(m_1)$.
	\end{definition}
	
	\begin{definition}
		Let $S$ be a semiring.
		\begin{enumerate}
			\item[(1)] An \emph{ideal} of $S$ is an additive submonoid $I$ such that $SI \subseteq I$. Such ideal is \emph{proper} if it is not $S$.
			\item[(2)] An ideal $I$ of $S$ is said to be \emph{prime} if it is proper and for all $a, b \in S$ if $ab \in I$, then $a \in I$ or $b \in I$.
			\item[(3)] An ideal $I$ of $S$ is said to be a \emph{k-ideal (or substractive ideal)}  if for any $a, b \in S$ if $a, a + b \in I$, then $b \in I$. A \emph{prime $k$-ideal} is a $k$-ideal 
			which is prime as an ideal.
			\item[(4)] A $k$-ideal $I$ is \textit{primary} if given $a,b \in S$ satisfying $ab \in I$ and $a \notin I$, there is an integer $n \geq 1$ such that $b^n \in I$.
			\item[(5)] If $X$ is a subset of $S$ then its \emph{k-closure} is the intersection of of all $k$-ideals of $S$ which contain $X$.
		\end{enumerate}
	\end{definition}
	
	\begin{lemma}\label{Subtractive_Closure_Idempotent_Case}
		If $S$ is idempotent and $X$ is a subset of $S$, then $\langle X \rangle_k = \{ c \in S | a + c = a \text{ for some } a \in \langle X \rangle \}$.
	\end{lemma}
	\begin{proof}
		From the proof of \cite{Lorscheid_Notes}*{p.20, Corollary 2.5.4}, we can write $\langle X \rangle_k$ as the set of $c \in S$ such that $a + c = b$ for some $a,b$ in $\langle X \rangle$. Since $S$ is idempotent, $a + c = b$ is equivalent to $a + c = a$.
	\end{proof}
	
	\begin{definition}\label{Substractive_Module_and_Substractive_Closure}
		Let $N$ be a semimodule over a semiring $S$.
		\begin{enumerate}
			\item[(i)] A nonempty subset $N'$ of  $N$ is said to be a \emph{k-subsemimodule} if each pair $(m, m') \in N \times N$ satisfying $m + m' \in N'$ and $m \in N'$ also satisfies $m' \in N'$.
			\item[(ii)] The \emph{k-closure} of an $S$-semimodule $N'$ of $N$, denoted $\langle N' \rangle_k$, is the intersection of all $k$-subsemimodules of $N$ containing $N'$.
		\end{enumerate}
	\end{definition}
	One has $\langle N' \rangle_k = \{ m \in N | m + n \in N' \text{ for some } n \in N'\}$ (see \cite{Golan_1999}*{p.155}). We now summarize some basic notions of order theory.
	\begin{definition}
		Let $(L,\leq)$ be a poset. We say that $L$ is a \emph{lattice} if for every $x, y \in L$, their greatest lower bound $x \wedge y$ and their lowest upper bound $x \vee y$ are elements of $L$. Such a lattice is said to be
		\begin{enumerate}
			\item[(1)] \emph{bounded} if there are elements $0, 1 \in L$ satisfying $a \wedge 0 = 0$ and $a \vee 1 = 1$ for all $a \in L$.
			\item[(2)] \emph{complete} if for every $X \subseteq L$, $\wedge_{x \in X} x$ and $\vee_{x \in X} x$ exist.
			\item[(3)] \emph{multiplicative} if it has a binary multiplicative operation satisfying $a(b \vee c) = ab \vee ac$ and $(a \vee b)c = ac \vee bc$ for all $a, b, c \in L$.
			\item[(4)] \emph{modular} if for any $a, b, c \in L$ if $a \leq b$ then $a \vee ( c \wedge b) = (a \vee c) \wedge b$.
		\end{enumerate}
	\end{definition}
	\begin{definition}
		A \emph{complete lattice-ordered semigroup} is a multiplicative lattice $L$ which is
		\begin{enumerate}
			\item[(i)] \emph{commutative}, that is, for all $x, \in L$ we have $xy = yx$,
			\item[(ii)] \emph{associative}, that is, for all $x, y, z \in L$ we have $x(yz) = (xy)z$ and
			\item[(iii)] \emph{complete}.
		\end{enumerate}
	\end{definition}
	\begin{definition}
		In a complete lattice $L$, an element $c \in L$ is said to be \emph{compact} if for each set $\Lambda$, for each $\lambda \in \Lambda$, an element $a_\lambda \in L$ satisfying $c \leq \bigvee_{\lambda \in \Lambda} a_\lambda$, there exists a finite subset $\Omega$ of $\Lambda$ such that $c \leq \bigvee_{\lambda \in \Lambda} a_\lambda$. A complete lattice $L$ is \emph{algebraic} if each of its elements is a least upper bound of compact elements.
	\end{definition}
	\begin{definition}
		A semiring $S$ is \emph{lattice-ordered} if it also has the structure of a lattice such that, for all $a, b \in S$: (1) $a+b=a\vee b$, and (2) $ab\leq a\wedge b$,and the partial order $\leq$ is induced by the lattice structure on $S$.
	\end{definition}
	\begin{definition}
		Let $S$ be a lattice-ordered semiring that is complete for the lattice structure. Let $S^c$ be the subset of compact elements in $S$. If $S^c$ is closed under products and contains the multiplicative identity element $1_S$ of $S$, then $S^c$ is a submonoid of $S$ for this structure.
	\end{definition}
	Lattice-ordered semiring morphisms are semiring morphisms (see \cite{Golan_1999}*{p.239, Section 21}).
	\begin{definition}
		We say that a complete lattice-ordered semiring $S$ is an \textit{algebraic lattice} if every element of $S$ is the join of (possibly infinitely many) compact elements.
	\end{definition}
	
	\subsection{Relevant Examples}
	
	The following examples of the theory of Subsection \ref{Subsection_Semistructures_and_Order_Theory} will be needed later.
	
	\begin{example}\label{Example_N}
		On $\N$ we define a lattice structure by setting, for all $a, b \in \N$, $a \wedge b := \mathrm{lcm}(a,b)$ and $a \vee b := \gcd(a,b)$. In this lattice, we have $a \leq b$ precisely when $b$ divides $a$. If $\cdot$ denotes the integer multiplication on $\N$, then $\N^{\gcd} := (\N, \vee, \cdot)$ is a lattice-ordered semiring (see \cite{Golan_1999}*{21.9}).    
	\end{example}
	
	\begin{example}\label{ModMR_Lattice}
		If $R$ is a commutative ring and $M$ is an $R$-module, we let $\Mod(M, R)$ be the set of all $R$-submodules of $M$. The set $\Mod(M,  R)$ can be endowed with a structure of commutative monoid as it is closed under finite sums of $R$-submodules and its identity element is the zero $R$-module. It can also be seen as a poset by defining the following partial order $\leq$. If $N_1, N_2 \in \Mod(M,R)$, then we write $N_1 \leq N_2$ if $N_1 + N_2 = N_2$. The pair $(\Mod(M, R), \leq )$ is also a lattice if we define for any $N_1, N_2 \in \Mod(M, R)$ their greatest lower bound $N_1 \vee N_2$ and their least upper bound to be respectively the $R$-submodules $N_1 \cap N_2$ and $N_1 + N_2$ of $M$. This lattice is bounded because $0, M \in \Mod(M,R)$ are such that for any $N \in \Mod(M,R)$, $N \vee M = N + M = M$ and $N \wedge 0 = N \cap 0 = 0$. Defining the greatest lower bound and the least upper bound on an arbitrary family $\{ N_\lambda \in \id(R) \mid \lambda \in \Lambda \}$ of elements of $\Mod(M, R)$ as the $R$-submodules
		$$
		\bigvee_{\lambda \in \Lambda} N_\lambda := \sum_{\lambda \in \Lambda} N_\lambda \text{ and } \bigwedge_{\lambda \in \Lambda} N_\lambda := \bigcap_{\lambda \in \Lambda} N_\lambda,
		$$
		implies that the lattice $(\Mod(M,R), \leq)$ is complete. Finally, this complete lattice is algebraic because every $N \in \Mod(M, R)$ can be written as the sum of cyclic $R$-submodules $N = \sum_{n \in N} R \cdot n$.
		
		Its subset $\Mod(M, R)^c$ of compact elements is the set of finitely generated $R$-submodules of $M$. In particular, $\Mod(M, R) = \Mod(M, R)^c$ precisely when $R$ is Noetherian. In general, the set $\Mod(M, R)^c$ has a structure of bounded lattice coming from the one of $\Mod(M, R)$. However, if $R$ is not Noetherian, then there is an $N \in \Mod(M, R)$ not in $\Mod(M, R)^c$ with an infinite set $\{ n_\lambda \in N \mid \lambda \in \Lambda \}$ of generators. So $\Mod(M,R)^c$ is not complete, since $N = \sum_{\lambda \in \Lambda} R \cdot n_\lambda = \bigvee_{\lambda \in \Lambda} R \cdot n_\lambda$.
	\end{example}
	
	\begin{example}
		Specializing Example \ref{ModMR_Lattice} to $M = R$, we write $\id(R) := \Mod(R,R)$ and $\id(R)^c := \Mod(R,R)^c$. Dedekind was the first to observe that the set $\id(R)$ together with addition and multiplication of ideals formed an additively-idempotent semiring (see \cite{Golan_1999}*{p. 14, Example 1.4}). This semiring is even simple since for any $I \in \id(R)$ we have $I + R = R$.
	\end{example}
	
	\begin{example}\label{Example_Bezout}
		If $R$ is a B\'ezout domain, and $R^\times$ is its group of units, then $\id(R)^c$ coincides with the quotient $R /R^\times$ and is a multiplicatively cancellative idempotent semiring. As $Frac(\id(R)^c)=Frac(R)/R^\times$, $Frac(\id(R)^c)^\times$ is the \textit{divisibility group} of $R$ of \cite{Rump_Yang_2008}*{p. 1}.
	\end{example}
	
	\begin{example}
		An integral domain $R$ is {\em Pr\"ufer} if every non-zero finitely generated ideal is invertible. Noetherian Pr\"ufer domains are precisely the Dedekind domains, while Pr\"ufer domains which are also GCD domains are the B\'ezout domains. The set $\id(R)^c$ is a marked monoid with no nonzero zerodivisors. Hence, the semiring $\id(R)^c$  has no nonzero zerodivisors. Moreover, $\id(R)^c$ is multiplicatively cancellative if and only $R$ is a Pr\"ufer domain (see \cite{Golan_1999}*{Example 4.36}).
	\end{example}
	
	\begin{example}\label{Fake_Module}
		The action of a ring $R$ on an $R$-module $M$ yields an action of $\id(R)$ on $\Mod(M,R)$, endowing $\Mod(M,R)$ with a structure of $\id(R)$-semimodule. Note that for all $I, J \in \id(R)$, all $N \in \Mod(M,R)$ and all families $\{ J_\lambda \in \id(R) \mid \lambda \in \Lambda \}$ and $\{ N_\lambda \in \Mod(M,R) \mid \lambda \in \Lambda \}$, we have $\left( \sum_{\lambda \in \Lambda} J_\lambda \right)N = \sum_{\lambda \in \Lambda} J_\lambda N$, $J\left( \sum_{\lambda \in \Lambda} N_\lambda \right) = \sum_{\lambda \in \Lambda} J N_\lambda$, $(IJ)N = I(JN)$, $R N = N$ and $0_R N = 0_N$. In the 1970's, D. D. Anderson  studied these objects and called them \emph{fake rings} and \emph{fake modules} in \cite{Anderson_1977}. In particular, if $J \in \id(R)^c$ and $N \in \Mod(M,R)^c$, then $JN \in \Mod(M,R)^c$. With this multiplication, $\Mod(M,R)^c$ is an $\id(R)^c$-semimodule.
	\end{example}
	
	There is a corresponding discussion with $k$-subsemimodules and $k$-ideals.
	
	\begin{example}
		Let $\mathrm{SMod}_k(\Mod(M,R))$ be the set of $k$-subgroups of $\Mod(M,R)$. Note that each of these elements has a structure of $k$-$\id(R)$-subsemimodule of $\Mod(M,R)$, but our results will not use this. Endow with similar operations as in Example \ref{ModMR_Lattice}, by adapting the arguments of \cite{Jun_Ray_Tolliver_2022}*{p.332, Example 3.19} one can see that $\mathrm{SMod}_k(\Mod(M,R))$ is an algebraic lattice and that $\mathrm{SMod}_k(\Mod(M, R))^c$ is the set of finitely generated $k$-subsemigroups of $\Mod(M, R)$. The last conclusion can be reworded by saying that $\SMod_k(\Mod(M,R))^c$ is the set $\SMod_k(\Mod(M,R)^c)$. Once again, one can specialize to the case $M = R$ and write $\id_k(\id(R))$ for $\SMod_k(\Mod(R,R))$. For any two $I,J\in \id_k(\id(R))$, we define their product $I\times J$ as the $k$-closure $\langle IJ \rangle_k$ of the product ideal $IJ$. This gives $\id_k(\id(R))$ a structure of semiring and in fact of lattice-ordered semiring.
	\end{example}
	
	\subsection{Non-Archimedean Seminorms}\label{subsection_NA_Seminorms}
	
	In this subsection, we define non-Archimedean seminorms over Abelian groups, rings and modules over normed rings and give some examples. As before, we write $a \leq b$ to mean that $a + b = b$. Non-Archimedean seminorms on an Abelian group are defined as follows.
	
	\begin{definition}
		\label{def_gen_nav_monoids}
		A {\em non-Archimedean seminorm} is a map $|\cdot|:A\xrightarrow{}{N}$ from an Abelian group $(A,+,0)$ to an idempotent monoid ${(N,+,0)}$ that satisfies
		\begin{enumerate}
			\item (unit) $|0|=0$ \text{ and }
			\item (subadditivity) $|a+b|\leq |a|+|b|$ for all $a, b \in A$.
		\end{enumerate}
		Such non-Archimedean seminorm $|\cdot|$ is a {\em norm} if each $a \in A \smallsetminus \{0\}$ satisfies $|a| \ne 0$ and we call the triple $(A, | \cdot |, {N})$ a \textit{seminormed group} (respectively, \textit{normed group}).
	\end{definition}
	We now turn to rings. Inspired by the concept of non-Archimedean seminorm on a ring as in \cite{Berkovich_1990}*{p. 11}, we ask a non-Archimedean seminorm to be submultiplicative, which generalizes the multiplicative valuation of \cite{Giansiracusa_Giansiracusa_2016}*{p.3989, Definition 2.5.1}. If $S$ is totally ordered, the latter is the definition of a \emph{Krull valuation}.
	
	\begin{definition}
		\label{def_gen_nav}
		A {\em non-Archimedean seminorm} is a map $|\cdot|:R\xrightarrow{}S$ from a ring $R$ to an idempotent semiring $S$ such that
		\begin{enumerate}
			\item (unit) $|0|=0$ and $|1|=1$,
			\item (sign) $|-1|=1$,
			\item (submultiplicativity) $|ab|\leq |a||b|$ for all $a, b \in R$ and
			\item (subadditivity) $|a+b|\leq |a|+|b|$ for all $a, b \in R$. 
		\end{enumerate}
		The non-Archimedean seminorm $|\cdot|$ is {\it multiplicative} if $|ab|= |a||b|$ holds for all $a,b \in R$, and it is a {\em norm} if each non-zero $a \in R$ we have $|a| \ne 0$. We keep the term {\em valuation} for a multiplicative norm. We call the triple $(R, | \cdot |, S)$ a \textit{seminormed ring} (respectively, \textit{normed ring}). Since only non-Archimedean seminorms are considered, we call them \textit{seminorms} and denote them using lowercase Roman letters instead of bars $| \cdot |$. 
	\end{definition}
	
	\begin{example}\label{example_special_cases}
		Let $v:R\to S$ be a multiplicative seminorm. If $S$ is an idempotent semifield, one can construct $\ell$-valuations (see \cite{Rump_Yang_2008}*{p. 266}). In particular, we have $v(-1) = 1$ since $(S\setminus\{0\},\times,1)$ is torsion-free. More simply, if $R$ is a field, then $v$ is a valuation.
	\end{example}
	\begin{definition}
		\label{def:int_seminorm}
		A seminorm $v:R\xrightarrow{}S$ is {\em integral} if $v(a) \leq 1$ for all $a \in R$.
	\end{definition}
	\begin{remark}\label{Remark_Integral_Seminorm}
		If $S$ is an idempotent semiring, then the set $S_{\leq 1} := \{y\in S : y\leq1\}$ is a simple semiring and we let $i: S_{\leq 1} \to S$ be the inclusion map. If $v:R\xrightarrow{}S$ seminormed ring, then the subring $v^{-1}(S_{\leq 1})$ of $R$ is called the \emph{seminorm ring} of $v$.
		Note that $v$ is integral if and only if it factors as $i \circ v'$, where $v':R\to S_{\leq 1}$ is a seminorm. From now on, if  $v$ is an integral seminorm, then we assume that $S$ is a simple semiring. In particular $b a\leq a$ for all $a,b\in S$ as we have $b a+a=a(b+1)=a$.
	\end{remark}
	Following \cite{Berkovich_1990}*{p. 12}, we consider norms defined on modules over normed rings.
	\begin{definition}
		\label{def_normod}
		Let $v: R \to S$ be a normed ring, let $M$ be an $R$-module and let $N$ be an $S$-semimodule. A seminorm of Abelian groups $w:M\xrightarrow{}N$ is a $v$-{\em seminorm} if there is a nonzero $C \in S$ such that
		\begin{equation}
			\label{eq_inequality}
			w(r\cdot m)\leq Cv(r)\cdot w(m)
		\end{equation}
		for each $r \in R$ and $m \in M$. The $v$-seminorm $w$ is called a $v$-{\em norm} if each $m \in M \smallsetminus \{0\}$ satisfies $w(m) \ne 0$.
	\end{definition}
	
	\section{Universal Valuation and Universal Norm}\label{Section_Universal}
	
	\subsection{The Universal Valuation}\label{Subsection_Universal_Valuation}
	If $f: R_1 \xrightarrow{} R_2$ is a ring morphism, we let $\id^c(f)$ be the map that sends an $I \in \id^c(R_1)$ to $ \langle f(I) \rangle \in \id^c(R_2)$. Since each $\id(R)^c$ is an idempotent (even simple) semiring for the sum and product of ideals and as a result each $\id^c(f)$ is a semiring morphism, this yields a functor $\id^c$ from the category $\Ring$ of commutative rings to the category $\SRing^{\text{idem}}$ of idempotent semirings.
	
	The so-called \textit{universal valuation of $R$} is the map $u_R: R\xrightarrow{}\id(R)^c$ that sends an element $a \in R$ to the principal ideal $\langle a \rangle:= R\cdot a$. Given a ring morphism $f: R_1 \to R_2$ one has the commutative diagram $\id^c(f) \circ u_{R_1} = u_{R_2} \circ f$. As observed in \cite{Giansiracusa_Giansiracusa_2016}*{p.3391, Proposition 2.5.4} and \cite{Macpherson_2020}*{p.449, Example 3.16}, the map $u_R$ is an integral seminorm and it is moreover universal among the seminorms defined over $R$. More precisely, any seminorm that takes values in an idempotent semiring, or equivalently simple semiring because of Remark \ref{Remark_Integral_Seminorm}, factors uniquely through $u_R$. For a ring $R$, there is a set-functor $\mathrm{isn}_R$ from the category of simple semirings, that sends a simple semiring $S$ to the set $\mathrm{isn}_R(S)$ of integral seminorms $v: R \to S$. In the above two \textit{loc. cit.}, the authors also deduce that $\mathrm{ins}_R$ is represented by $\id(R)^c$.
	
	\begin{remark}\label{Remark_choose_generators}
		Let $f: R \to S$ be a ring morphism. Let $J \in \id(R)^c$ with generating set $\{a_1,\cdots,a_m\}$. Then, $\id^c(f)(\sum_{i=1}^m Ra_i)=\sum_{i=1}^m Sf(a_i)$. This map does not depend on the choice of generators for $J$. Indeed, if $\{c_1, \cdots, c_n\}$ is a second set of generators, then for each $1 \leq j \leq n$ there are $x_{1,j}, \cdots, x_{m,j} \in R$ such that $c_j = \sum_{i=1}^m x_{i,j} a_j$. Then, $f(c_j)$ equals $\sum_{i=1}^m f(x_{i,j}) f(a_i)$ and so $\id^c(f)(\sum_{i=1}^m R a_i) = \sum_{j=1}^n S f(c_j)$. By symmetry, $\id^c(f)(\sum_{j=1}^n R c_j) = \sum_{i=1}^m S f(a_i)$.
	\end{remark}
	\begin{lemma}\label{lemma_ind_hom}
		If $v:R\to S$ is an integral seminorm, then the map $\hat v:\id(R)^c\to S$ that sends $J = \sum_{i=1}^m R a_i$, to $\sum_{i=1}^m v(a_i)$ is a semiring morphism.
	\end{lemma}
	\begin{proof}
		The map $\hat{v}$ is well defined. Indeed, if $J=\sum_{i=1}^m R a_i=\sum_{j=1}^n R b_j$, then for each $1 \leq i \leq m$, there exist $c_{i,j} \in R$ with $1 \leq j \leq n$ such that $a_i=\sum_{j=1}^n c_{i,j}b_j$, and thus $v(a_i)\leq \sum_{j=1}^n v(c_{i,j})v(b_j)$. Since $S$ is simple, we have $v(c_{i,j})v(b_j)\leq v(b_j)$ for all $j$, so $v(a_i)\leq \sum_{j=1}^n v(c_{i,j})v(b_j)\leq \sum_{j=1}^n v(b_j)$, hence $\sum_{i=1}^m v(a_i)\leq m \sum_{j=1}^n v(b_j) = \sum_{j=1}^n v(b_j)$. By symmetry, $ \sum_{j=1}^n v(b_j)\leq \sum_{i = 1}^m v(a_i)$, and so $\hat{v}$ is well defined.
		
		The map $\hat{v}$ is a semiring morphism. Indeed, it follows immediately from the definition of $\hat{v}$ that $\hat v(0)= \langle 0 \rangle$ and $\hat v(1)= \langle 1 \rangle$. Finally, if $I=\sum_{i=1}^m Ra_i$ and $J=\sum_{j=1}^nR b_j$, then $\hat v(\alpha+\beta)=\sum_{i=1}^m v(a_i) + \sum_{j=1}^n v(b_j) = \hat v(\alpha)+\hat v(\beta)$ and $\hat v(\alpha\beta)=\sum_{j=1}^n\sum_{i=1}^m v(a_i)v(b_j)= \hat{v}(\alpha) \hat{v}(\beta)$.
	\end{proof}
	
	\subsection{The Universal Norm}\label{Sect_Val_Mods}
	In this subsection, we fix a ring $R$ over which all modules are going to be defined. If $M$ is an $R$-module, the set $\Mod(M,R)$ of $R$-submodules of $M$ is an additive idempotent monoid for the sum of $R$-submodules. We write $\Mod(M)$ for $\Mod(M,R)$ and $\Mod(M)^c$ for its submonoid of compact elements when there is no ambiguity. Given an $R$-module morphism $f:M_1\xrightarrow{}M_2$, we denote by $\Mod^c(f)$ the map that sends an $N \in \Mod^c(M_1)$ to $\langle f(N) \rangle = f(N)$ $\Mod^c(M_2)$. So if $N = \sum_{i=1}^r R n_i$, then $f(N) = \sum_{i=1}^r \Mod^c(f)(R n_i)$. As mentioned in Example \ref{Fake_Module}, the action of $R$ on $M$ turns $\Mod(M)$ into an idempotent $\id(R)$-semimodule and $\Mod(M)^c$ into an idempotent $\id(R)^c$-semimodule. The map $\Mod(f)$ then becomes an $\id(R)^c$-semimodule morphism and we thus have a functor
	$$
	\Mod^c : R-\Mod \to \id(R)^c-\mathrm{SMod},
	$$
	to the category $\id(R)^c-\mathrm{SMod}$ of $\id(R)^c$-semimodules. Similarly to Proposition \ref{lemma_ind_hom}, we have the following.
	
	\begin{lemma}
		\label{lemma_u_M_univ}
		Let $w:M\to N$ be an $u_R$-seminorm.
		\begin{enumerate}
			\item[(a)] The map $\hat w$ that sends an element $T=\sum_{i=1}^m R t_i$ of $\Mod(M)^c$ to $\sum_{i=1}^m w(t_i) \in N$ is a monoid morphism satisfying $\hat w(J T)\leq J\hat w(T)$ for all $J \in \id(R)^c$.
			\item[(b)] The map $\hat w$ from (a) is an $\id(R)^c$-semimodule morphism if and only if $w$ satisfies the following stronger version of \eqref{eq_inequality}: For all $r \in R$ and $m \in M$, we have
			\begin{equation}\label{eq_strong_inequality}
				w(rm)=u_R(r)w(m).
			\end{equation}
		\end{enumerate}
	\end{lemma}
	\begin{proof}
		\begin{enumerate}
			\item[(a)] 
			The map $\hat w$ is independent of the choice of generators for $T \in \Mod(M)^c$. Indeed, if $T$ has generating sets $\{t_1,\cdots, t_m\}$ and $\{s_1,\cdots,s_n\}$, then each $s_j$ can be written as a sum $s_j=\sum_{i=1}^mc_{j,i}  t_i$ for some $c_{j,i} \in R$. Thus, there is a $C\in \id(R)^c \smallsetminus \{0\}$ such that
			\begin{equation}\label{Equation_Norm}
				w(s_j)\leq \sum_{i=1}^m w(c_{j,i} t_i)\leq \sum_{i=1}^m Cu_R(c_{j,i}) w( t_i).
			\end{equation}
			Now, $Cu_R(c_{i,j}) \leq R$ in $\id(R)^c$ and together with \eqref{Equation_Norm} this yield $\sum_{j=1}^n w(s_j) \leq \sum_{i=1}^m w(t_i)$. Similarly, we obtain $ \sum_{i=1}^w(t_i)\leq \sum_{j=1}^n w(s_j)$.
			
			The map $\hat{w}$ is a monoid morphism. Indeed, we have $\hat w(0)=(0)$ and for $T_a = \sum_{i=1}^m a_i$ and $T_b = \sum_{j=1}^n b_j$ in $\Mod(M)^c$, we have $\hat w(T_a + T_b)=\sum_{i=1}^m w(a_i)+\sum_{j=1}^n w(b_j)=\hat w(T_a)+\hat w(T_b)$.
			
			Finally, for $J = \sum_{j=1}^n R x_j \in \id(R)^c$ and $T = \sum_{i=1}^m R t_i \in \Mod(M)^c$ we have $\hat w( J T) = \sum_{i,j} w( x_j t_i )$ and $J \hat w(T) = \sum_{i,j} Rx_j w(t_i)$. Thus,
			$$
			\hat w(J T)=\sum_{i,j}w(x_j t_i)\leq \sum_{i,j}C R x_j w(t_i)\leq \sum_{i,j}R x_j w(t_i)=J\hat w(T),
			$$
			which concludes the proof.
			\item[(b)] If \eqref{eq_strong_inequality} holds, then $\hat w(JT)=\hat w(\sum_{i,j}(x_j)(t_i))=\sum_{i,j}\hat w(x_j t_i)=J\hat w(T)$, and in particular $J\hat w(T)\leq \hat w(J T)$. Conversely, we recover \eqref{eq_strong_inequality} by applying $\hat w(JT)= J\hat w(T)$ to $J=Rr$ and $T=Rm$.\qedhere
		\end{enumerate}
	\end{proof}
	
	Next, we prove a universal seminorm analogue of \cite{Giansiracusa_Giansiracusa_2016}*{Proposition 2.5.4} and \cite{Macpherson_2020}*{p.449, Example 3.16} for the $u_R$-norm $u_M : M \to \Mod(M,R)^c$ (see Subsection \ref{Subsection_Overview}).
	
	\begin{proposition}\label{prop_snorm_module}
		The  map $u_M$ satisfies the following properties:\begin{enumerate}
			\item[(i)] Any $u_R$-seminorm on $M$ factors uniquely through $u_M$.
			\item[(ii)] If $f:M_1\xrightarrow{}M_2$ is an $R$-module morphism, then $u_{M_2} \circ f = \Mod^c(f) \circ u_{M_1}$.
			\item[(iii)] For an $R$-module $M$, the functor $\text{sn}_{u_R}$ from the category of $\id(R)^c$-semimodules to the category of sets that sends a $\id(R)^c$-semimodule $N$ to the set  $\text{sn}_{u_R}(N)=\{u_R-\text{seminorms } w: M \to N\}$ is represented by $\Mod(M,R)^c$.
		\end{enumerate}
	\end{proposition}
	
	\begin{proof}
		\begin{enumerate}
			\item[(i)] It follows from Lemma \ref{lemma_u_M_univ} (b) and then (a) that $u_M$ is the \emph{universal $u_R$-norm} and then that any $u_R$-seminorm factors uniquely through it.
			\item[(ii)] Observe that $R(a+b)=u_M(a+b)\leq u_M(a)+u_M(b)=R a+R b$ holds. Also, $R a = \langle 0 \rangle$ if and only if $a=0$ since $1\cdot a=a$. As  $u_M(r m)=\langle r m\rangle=u_R(r) u_M(m)$, we can pick $C= \langle 1 \rangle$ to verify Definition \ref{def_normod}. That the diagram commutes is verified straightforwardly.
			\item[(iii)] Since $u_M$ satisfies \eqref{eq_strong_inequality} and $\text{sn}_{u_R}(N)=\operatorname{Hom}_{\mathrm{SMod}}(\Mod(M),N)$, then $\Mod(M)^c$ represents $\text{sn}_{u_R}$.
		\end{enumerate}
	\end{proof}
	
	\subsection{Module and Ring Correspondences} 
	\label{sect:properties}
	
	The next result is well-known and follows for example from \cite{Jun_Ray_Tolliver_2022}*{Theorems 3.24 and 3.29}.
	\begin{proposition}[Correspondence for Submodules]\label{Correspondence_Modules_over_a_Ring}
		The $u_R$-norm $u_M : M \to \Mod(M,R)^c$ defines an isomorphism of ordered sets: 
		\[
		\begin{tikzcd}
			\mathrm{Mod}(M,R) \arrow[rr, "\langle u_M(\cdot)\rangle_k", shift left=3] &  & \mathrm{SMod}_k\left( \Mod(M,R)^c \right) \arrow[ll, "u_M^{-1}(\cdot)", shift left=3].
		\end{tikzcd}
		\]
		In particular, the two maps induced by the integral valuation $u_R : R \to \id(R)^c$ are both ordered set and semiring isomorphisms
		\begin{equation}\label{Equation_Ordered_Set_and_Semiring_Isomorphism}
			\begin{tikzcd}
				(\mathrm{Id}(R),\subseteq) \arrow[rr, "\langle u_R(\cdot)\rangle_k", shift left=3] &  & (\mathrm{Id}_k\left( \id(R)^c \right),\subseteq). \arrow[ll, "u_R^{-1}(\cdot)", shift left=3]
			\end{tikzcd}
		\end{equation}
	\end{proposition}
	
	We now consider the effect of the universal valuation on primary ideals and hence on prime ideals.
	
	\begin{proposition}\label{Correspondence_Ideals_Preserves_Primes}
		The correspondence \eqref{Equation_Ordered_Set_and_Semiring_Isomorphism} preserves primary ideals.
	\end{proposition}
	\begin{proof}
		Let $\mathfrak{q} \subset \id(R)^c$ be a primary $k$-ideal. Let $a, b \in R$ such that $ab^n \in u_R^{-1}(\mathfrak{q})$ for some integer $n \geq 1$. Since $u_R(a)u_R(b^n) = u_R(ab^n)$ and $\mathfrak{q}$ is primary, then $u_R(a) \in \mathfrak{q}$ or $u_R(b^n) \in \mathfrak{q}$. So $a \in u_R^{-1}(\mathfrak{q})$ or $b^n \in u_R^{-1}(\mathfrak{q})$, and $u_R^{-1}(\mathfrak{q})$ is primary. Conversely, let $\mathfrak{q} \subset R$ be a primary ideal. Let $\alpha, \beta \in \id(R)^c$ be such that $\alpha \beta^n \in \langle u_R(\mathfrak{q}) \rangle_k$ for some integer $n \geq 1$. Since $\id(R)^c$ is idempotent, there is a $\gamma \in \langle u_R(\mathfrak{q}) \rangle$ such that $\gamma + \alpha = \gamma$. Since $\mathfrak{q}$ is an ideal, then $\alpha \beta^n \subset \mathfrak{q}$. But $\mathfrak{q}$ is primary and so $\alpha \subset \mathfrak{q}$ or $\beta^n \subset \mathfrak{q}$. Writing $\alpha$ and $\beta^n$ in terms of respective finite generating sets. we deduce that either $\alpha \in \langle u_R(\mathfrak{q})\rangle \subset \langle u_R(\mathfrak{q}) \rangle_k$ or $\beta^n \in \langle u_R(\mathfrak{q}) \rangle \subset \langle u_R(\mathfrak{q}) \rangle_k$, concluding.
	\end{proof}
	\begin{definition}\label{Definition_Affine_Schemes}
		The set of prime $k$-ideals of $\id(R)^c$ is denoted $\Spec_k(\id(R)^c)$ and its subset of prime $k$-ideals containing a set $X$ is denoted $V_k(X)$.
	\end{definition}
	\begin{lemma}\label{Lemma_Closed_Subset_V_and_Vk}
		For each subset $X$ of $\Spec_k(\id(R)^c)$, we have $V_k(X) = V_k(\langle X \rangle) = V_k(\langle X \rangle_k)$.
	\end{lemma}
	\begin{proof}
		One immediately verifies that $V_k(X) = V_k(\langle X \rangle)$, and $ \langle X \rangle \subset \langle X \rangle_k$ yields $ V_k(\langle X \rangle) \supset V_k(\langle X \rangle_k)$. Now for each $\pp \in \Spec_k(\id(R)^c)$ containing $\langle X \rangle$, we have
		$\langle X \rangle_k \subset \langle \pp \rangle_k = \pp$ and thus, $V_k( \langle X \rangle) = V_k( \langle X \rangle_k)$.
	\end{proof}
	\begin{definition}\label{Definition_Distinguished_Open_Subsets}
		For each $k$-ideal $I$ of $\id(R)^c$, we denote $D_k(I)$ the complement of $V_k(I)$ in $\Spec_k(\id(R)^c)$. In particular, if $f \in \id(R)^c$, we write $D_k(f)$ for $D_k(\langle f \rangle_k) = \{ \pp \in \Spec_k(\id(R)^c) : f \notin \pp \}$.
	\end{definition}
	By Proposition \ref{Correspondence_Ideals_Preserves_Primes}, the universal valuation $u_R : R \to \id(R)^c$ induces a set bijection 
	\begin{equation}
		\label{Homeomorphism_SpecR_SpeckfgIdR}
		\widetilde{u_R}:\Spec(R)\to \Spec_k(\id(R)^c).
	\end{equation}
	\begin{proposition}
		\label{prop_homeo_real}
		The set $\Spec_k(\id(R)^c)$ is a topological space for the closed subsets $V_k(X)$, as $X$ runs over the subsets of $\id(R)^c$. The topology coincides with the one induced by \eqref{Homeomorphism_SpecR_SpeckfgIdR}. Hence, $\widetilde{u_R}$ is a homeomorphism. 
	\end{proposition}
	\begin{proof} From Lemma \ref{Lemma_Closed_Subset_V_and_Vk}, it suffices to consider $k$-ideals. Since $V_k(\id(R)^c) = \emptyset$, then $D_k(\id(R)^c) = \Spec_k(\id(R)^c)$. Let $I_1, I_2 \in \id(R)$. From Proposition \ref{Correspondence_Ideals_Preserves_Primes} we have
		$$
		V(I_1) \cup V(I_2) = V(I_1 I_2) = V_k\left( \langle u_R(I_1) u_R(I_2) \rangle_k \right) = V_k(u_R(I_1)) \cup V_k( u_R(I_2)).
		$$
		By induction, for any integer $n \geq 1$ if $I_1, \cdots, I_n \in \id(R)$, then $\cup_{r=1}^n V(I_r) = \cup_{r=1}^n V_k\left( u_R(I_r)\right)$.
		Finally, we finish the proof by seeing that for any $\Lambda$ and any family $\{ I_\lambda | \lambda \in \Lambda \}$ of $k$-ideals of $\id(R)^c$, we have
		$$\bigcap_{\lambda \in \Lambda} V_k(I_\lambda) = V_k\left( \sum_{\lambda \in \Lambda} I_\lambda\right) = V\left( \sum_{\lambda \in \Lambda} u_R^{-1}\left(I_\lambda\right)\right)=\bigcap_{\lambda \in \Lambda} V(u_R^{-1}\left(I_\lambda\right)).
		$$
	\end{proof}
	\begin{definition}\label{def_topo_space}
		The topology on the set $Spec_k(\id(R)^c)$ from Proposition \ref{prop_homeo_real} is called the \textit{Zariski topology}.
	\end{definition}
	\begin{remark}
		The topological space $Spec_k(\id(R)^c)$ is a \textit{spectral space}, while the set $Spec(\id(R)^c)$ endowed with the Zariski topology of all prime ideals of $\id(R)^c$ is not. The inclusion map $i:\Spec_k(\id(R)^c)\to Spec(\id(R)^c)$ is continuous since if $I \in Id(\id(R)^c)$, then $i^{-1}(V(I))=V_k(I)$. Note also that $\Spec_k(\id(R)^c)$ is irreducible if and only if it has no nonzero zerodivisors (cf.  \cite{Jun_2015}*{p.51, Lemma 2.2.7}).
	\end{remark}
	
	For the basic definitions and properties of localization $V^{-1}S$ of a semiring $S$ at a multiplicatively closed subset $V\subset S$, we refer the reader to \cite{Jun_2015}*{p. 12}. Note that if $S$ is idempotent, then so is $V^{-1}S$. 
	
	\begin{proposition}\label{prop_loc_val}
		If $U \subseteq \id(R)^c$ is a multiplicatively closed subset, then so is $W := u_R^{-1}(U) \subseteq R$. If $U$ has no nonzero zerodivisors, then the map $\mu:W^{-1}R\to U^{-1}\left(\id(R)^c\right)$ defined by $a/b \mapsto u_R(a)/u_R(b)$ is a seminorm and we have a commutative diagram:
		\begin{equation}
			\begin{tikzcd}
				W^{-1}R \arrow[dd, "\mu"', bend right=67] \arrow[d, "u_{W^{-1}R}"] &  &                                                       & R \arrow[lll, "\iota_W"] \arrow[ld, "u_R"'] \arrow[llldd, "\iota_U", bend left] \\
				\id(W^{-1}R)^c                                             &  & \id(R)^c \arrow[ll, "\id^c(\iota_W)"] &                                                                                 \\
				U^{-1}\left(\id(R)^c\right)\arrow[u, "\exists !~\zeta"', dashed]       &  &                                                       &                                                                                
			\end{tikzcd}
		\end{equation}
		where the morphisms $\iota_U$ and $\iota_W$ are the localization morphisms.
	\end{proposition}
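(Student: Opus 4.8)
The proof splits into three assertions, handled in order: (a) $W=v^{-1}(U)$ is multiplicatively closed; (b) the formula $\frac ab\mapsto\frac{v(a)}{v(b)}$ gives a well-defined seminorm $\mu$; (c) the displayed diagram commutes for a unique semiring morphism $\zeta$. The engine throughout is the universal property of the ring localization $\iota_W\colon R\to W^{-1}R$ and of the semiring localization $\iota_U\colon T\to U^{-1}T$, together with the defining identity $\hat v\circ u_R=v$ from Proposition \ref{proposition_ind_hom} and the fact that, since $v$ is a realization (Definition \ref{defi_realization_LOS}, Theorem \ref{thm_characterization}), it is surjective and $\hat v\colon\fgid(R)\to T$ is an isomorphism.

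\textbf{(a) and (b).} That $W$ is multiplicatively closed is immediate: $v(1)=1\in U$ so $1\in W$, and for $a,b\in W$ one has $v(ab)=v(a)v(b)\in U$ by multiplicativity of $v$ and closure of $U$. For $\mu$: if $b\in W$ then $v(b)\in U$, so $\frac{v(a)}{v(b)}$ is a genuine element of $U^{-1}T$; and if $\frac ab=\frac{a'}{b'}$ in $W^{-1}R$, choose $s\in W$ with $sab'=sa'b$ in $R$, apply $v$ to get $v(s)v(a)v(b')=v(s)v(a')v(b)$ with $v(s)\in U$, which is exactly the relation witnessing $\frac{v(a)}{v(b)}=\frac{v(a')}{v(b')}$ in $U^{-1}T$; hence $\mu$ is well defined. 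Since $U^{-1}T$ is again an idempotent semiring, the seminorm axioms for $\mu$ follow formally from those of $v$: $\mu(0)=0$, $\mu(1)=1$, $\mu(-1)=v(-1)=1$, submultiplicativity (indeed multiplicativity) from multiplicativity of $v$, and subadditivity from $v(ad+cb)\le v(a)v(d)+v(c)v(b)$ in $T$ together with the observation that $x\le y$ in $T$ forces $\frac{x}{v(bd)}\le\frac{y}{v(bd)}$ in $U^{-1}T$. This is also where the hypothesis that $U$ has no zero divisors enters: it guarantees $0\notin U$, so $W^{-1}R\ne 0\ne U^{-1}T$, and it makes $\mu$ in fact a norm (a valuation), because if $\frac{v(a)}{v(b)}=0$ then $t\,v(a)=0$ for some $t\in U$, forcing $v(a)=0$, hence $a=0$ since $v$ is a norm.

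\textbf{(c).} The part of the diagram not involving $\zeta$ commutes for formal reasons: $u_{W^{-1}R}\circ\iota_W=\fgid(\iota_W)\circ u_R$ is the naturality square of $u_{(\cdot)}$ (Proposition \ref{uR_is_universal}(ii) applied to $\iota_W$); $\hat v^{-1}\circ v=u_R$ is a restatement of $\hat v\circ u_R=v$ with $\hat v$ invertible; and $\mu\circ\iota_W=\iota_U\circ v$ because $\mu(\frac a1)=\frac{v(a)}{1}$. Now set $f:=\fgid(\iota_W)\circ\hat v^{-1}\colon T\to\fgid(W^{-1}R)$. The crucial point is that $f$ sends $U$ into the units of $\fgid(W^{-1}R)$: given $u\in U$, surjectivity of $v$ provides $w\in R$ with $v(w)=u$; then $w\in v^{-1}(U)=W$, so $\hat v^{-1}(u)=\hat v^{-1}(v(w))=u_R(w)=\langle w\rangle$, and $f(u)=\fgid(\iota_W)(\langle w\rangle)=\langle\iota_W(w)\rangle=W^{-1}R$ since $w\in W$ is invertible in $W^{-1}R$; thus $f(u)=1_{\fgid(W^{-1}R)}$. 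By the universal property of the semiring localization $\iota_U\colon T\to U^{-1}T$ there is therefore a unique semiring morphism $\zeta\colon U^{-1}T\to\fgid(W^{-1}R)$ with $\zeta\circ\iota_U=f$ — this is the dashed arrow, and uniqueness is automatic. Finally $\zeta\circ\mu=u_{W^{-1}R}$: for $\frac ab\in W^{-1}R$ with $b\in W$, $\mu(\frac ab)=\iota_U(v(a))\,\iota_U(v(b))^{-1}$, so $\zeta(\mu(\frac ab))=f(v(a))\,f(v(b))^{-1}=\langle\iota_W(a)\rangle\cdot\langle\iota_W(b)\rangle^{-1}=\langle\iota_W(a)\rangle$, while $u_{W^{-1}R}(\frac ab)=\langle\frac ab\rangle=\langle\iota_W(a)\rangle$ as well, because $\iota_W(b)$ is a unit.

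\textbf{Main obstacle.} The only non-formal ingredient is the claim $f(U)\subseteq\bigl(\fgid(W^{-1}R)\bigr)^{\times}$; everything else is either a universal property or a direct computation. This claim rests on combining surjectivity of the realization $v$ (so that each element of $U$ lifts to $W$), the identity $\hat v^{-1}\circ v=u_R$, and the elementary fact that elements of $W$ are inverted in $W^{-1}R$. Once it is established, the existence and uniqueness of $\zeta$ and the compatibility $\zeta\circ\mu=u_{W^{-1}R}$ are forced; the residual care needed is purely in keeping straight the identifications between $T$ and $\fgid(R)$ along $\hat v$.
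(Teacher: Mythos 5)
Your proof is correct and follows essentially the same route as the paper: check $W$ is multiplicatively closed, verify $\mu$ is a well-defined seminorm by transporting the localization relation through the multiplicative $v$, obtain $\zeta$ from the universal property of semiring localization after showing $\fgid(\iota_W)\circ\hat v^{-1}$ sends $U$ to units (in fact to $1$), and then compute $\zeta\circ\mu=u_{W^{-1}R}$ explicitly. Your treatment of well-definedness (carrying the auxiliary factor $s\in W$) is in fact slightly more careful than the paper's.
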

	\begin{proof}
		We have $1 \in W$ and if $a,b\in W$, then $u_R(ab)=u_R(a)u_R(b)\in U$. As $u_R$ is surjective, $u_R(W)=U$. The map $\mu:W^{-1}R\to U^{-1}\left(\id(R)^c\right)$ sending $a/b$ to $u_R(a)/u_R(b)$ is well defined since if $a/b=c/d$, then $u_R(a)u_R(d)=u_R(ad)=u_R(bc)=u_R(b)u_R(c)$, which implies $\mu(a/b)=\mu(c/d)$. That $\mu$ is a seminorm follows from the fact that $u_R: R\to \id(R)^c$ is a valuation. For instance, if $a/b, c/d \in W^{-1}R$, then
		\begin{enumerate}
			\item  $\mu(\frac{a}{b} \frac{c}{d})=\mu(\frac{ac}{bd})=\frac{u_R(ac)}{u_R(bd)}=\mu(\frac{a}{b})\mu(\frac{c}{d})$,
			\item $\mu(\frac{a}{b}+\frac{c}{d})=\mu(\frac{ad+bc}{bd})=\frac{u_R(ad+bc)}{u_R(bd)}\leq \frac{u_R(ad)+u_R(bc)}{u_R(bd)}=\mu(\frac{a}{b})+\mu(\frac{c}{d})$.
		\end{enumerate}
		Note that the seminorm $\mu$ is no longer integral. In fact, $\mu(\frac{a}{b})\leq_{U^{-1}\left(\id(R)^c\right)}1$ if and only if $u_R(a)\leq_{\id(R)^c} u_R(b)$. Let $\alpha \in U$. Since $u_R(W)=U$, we have $u_R(\alpha)=\langle a_1,\ldots,a_m \rangle$ with $a_i\in W$. As $\id^c(\iota_W)(u_R(\alpha))= \langle (\iota_W(a_1),\ldots,\iota_W(a_m) \rangle =\alpha$, each $\iota_W(a_i)$ is a unit in $W^{-1}R$. Thus $\id^c(\iota_W)(u_R(\alpha))=1$ and so $\alpha = 1$. Thus the existence of the map $\zeta: U^{-1}\id(R)^c \to \id(W^{-1}R)^c$ follows from the universal property of the localization for semirings. As in the case of commutative rings, the map $\zeta$ is determined by $\id^c(\iota_W) \circ u_R$ as
		$$
		\zeta\left( \frac{t}{u} \right) = \zeta(t) \zeta\left( \frac{1}{u} \right) = \left( \id^c(\iota_W) \circ u_R \right)(t) \cdot \left( \id^c(\iota_W) \circ u_R \right)(u)^{-1}
		$$
		and is well-defined. Now, there exist unique $\alpha = \langle a_1, \cdots, a_m \rangle$ and $\beta = \langle b_1, \cdots, b_n \rangle$ in $\id(R)^c$ such that $u_R(\alpha) = \sum_{i=1}^m u_R(a_i) = t$ and $u_R(\beta) = \sum_{j=1}^n u_R(b_j) = u$. Moreover, $\beta \in W$ since $u \in U$. As observed earlier, $\id^c(\iota_W)(\beta) = 1$. Therefore,
		$$
		\zeta\left( \frac{t}{u} \right) = \id^c(\iota_W)(\langle a_1, \cdots, a_m \rangle) = \left\langle \frac{a_1}{1}, \cdots, \frac{a_m}{1} \right\rangle.
		$$
		To conclude, we show that $\mu \circ \zeta = u_{W^{-1}R}$. Let $r/w \in W^{-1}R$. We have
		$\left( \zeta \circ \mu \right)\left( r/w \right) = \zeta\left( u_R(r)/u_R(w) \right) = \left\langle r/1 \right\rangle$ and $u_{W^{-1}R}\left( r/w \right) = \left\langle r/w\right\rangle = \left \langle r/1 \right\rangle$, which concludes the proof.
	\end{proof}
	
	\begin{proposition}\label{Localization_Commutes_with_kPrimes_in_fgId}
		Let $U \subseteq \id(R)^c$, $u_R:R\to \id(R)^c$ and $W=v^{-1}(U)\subset R$ as in Proposition \ref{prop_loc_val}. Then we have a set bijections
		$$
		\Spec\left(W^{-1}R \right)\simeq \Spec_k\left( \id\left( W^{-1}R \right)^c \right) \simeq \Spec_k\left( U^{-1}\left(\id(R)^c\right)\right).
		$$
	\end{proposition}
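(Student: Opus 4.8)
The plan is to factor each of the two asserted bijections through the common set $P=\{\mathfrak P\in\Spec_k(T):\mathfrak P\cap U=\emptyset\}$, and then to observe that the resulting composite is the one induced by the map $\zeta$ of Proposition~\ref{prop_loc_val}. For the first bijection, note that $\fgid(W^{-1}R)$ is realizable, a realization being the ring $W^{-1}R$ itself (condition (1) of Theorem~\ref{thm_characterization}); hence by \eqref{Homeomorphism_SpecR_SpeckfgIdR} and Proposition~\ref{prop_homeo_real} the map $\widetilde{u_{W^{-1}R}}$ is a homeomorphism $\Spec(W^{-1}R)\to\Spec_k(\fgid(W^{-1}R))$. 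Composing its inverse with the usual identification of $\Spec(W^{-1}R)$ with $\{\pp\in\Spec(R):\pp\cap W=\emptyset\}$ coming from $\iota_W$, and then with the homeomorphism $\widetilde{u_R}:\Spec(R)\to\Spec_k(T)$ (which carries primes to primes by Theorem~\ref{Correspondence_Ideals_Preserves_Primes}), one identifies $\Spec_k(\fgid(W^{-1}R))$ with the image of $\{\pp:\pp\cap W=\emptyset\}$. That this image equals $P$ is a short check: $v(W)=U$ (shown inside the proof of Proposition~\ref{prop_loc_val}), $u_R^{-1}(\widetilde{u_R}(\pp))=\pp$ by Theorem~\ref{Correspondence_Modules_over_a_Ring}, and for any prime $k$-ideal $\mathfrak P$ of $T$ one has $\mathfrak P\cap U=\emptyset$ iff $u_R^{-1}(\mathfrak P)\cap W=\emptyset$, since every element of $U$ is of the form $u_R(w)$ with $w\in W$ and $u_R(w)\in\mathfrak P$ iff $w\in u_R^{-1}(\mathfrak P)$.

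For the second bijection $\Spec_k(U^{-1}T)\simeq P$, I would invoke localization of semirings (\cite{Jun_2015}) and check its compatibility with subtractivity. The contraction $\iota_U^{-1}(\mathfrak Q)$ of a prime $k$-ideal $\mathfrak Q$ of $U^{-1}T$ is a prime ideal disjoint from $U$, and it is subtractive because preimages of subtractive subsets under semiring morphisms are subtractive. Conversely, for $\mathfrak P\in P$ the extension $U^{-1}\mathfrak P$ is a prime ideal with $\iota_U^{-1}(U^{-1}\mathfrak P)=\mathfrak P$, and it too is subtractive; this is the one place where it genuinely matters that $\mathfrak P$ is at once prime and a $k$-ideal. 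Concretely, $\mathfrak P\cap U=\emptyset$ gives $\tfrac{a}{s}\in U^{-1}\mathfrak P$ iff $a\in\mathfrak P$; so if $\tfrac{a}{s}+\tfrac{b}{t}=\tfrac{at+bs}{st}\in U^{-1}\mathfrak P$ and $\tfrac{a}{s}\in U^{-1}\mathfrak P$, then $at+bs\in\mathfrak P$ and $at\in\mathfrak P$, whence $bs\in\mathfrak P$ by subtractivity of $\mathfrak P$, whence $b\in\mathfrak P$ since $s\notin\mathfrak P$ and $\mathfrak P$ is prime, i.e.\ $\tfrac{b}{t}\in U^{-1}\mathfrak P$. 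Chaining the two identifications produces the claimed bijection $\Spec_k(\fgid(W^{-1}R))\simeq\Spec_k(U^{-1}T)$.

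Finally I would check that the composite bijection is the one induced by $\zeta:U^{-1}T\to\fgid(W^{-1}R)$: from the commutativity of the diagram in Proposition~\ref{prop_loc_val} together with Proposition~\ref{uR_is_universal}(ii) one extracts the identity $\zeta\circ\iota_U=\fgid(\iota_W)$, and since preimage along $\fgid(\iota_W)$ is precisely the map $\Spec_k(\fgid(W^{-1}R))\to\Spec_k(T)$ assembled above, passing to preimages in $\zeta\circ\iota_U=\fgid(\iota_W)$ exhibits $\zeta$ as realizing the bijection (in particular $\zeta$ itself need not be a semiring isomorphism, and the core bijection uses only the route through $\{\pp:\pp\cap W=\emptyset\}$). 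I expect the genuine obstacle to be this last bookkeeping step — verifying that the homeomorphisms coming from realizations and the identifications coming from ordinary ring and semiring localization are all mutually compatible through the diagram of Proposition~\ref{prop_loc_val} — rather than any single ideal-theoretic computation; among the latter, only the subtractivity of $U^{-1}\mathfrak P$ truly requires the interaction of primeness with the $k$-property.
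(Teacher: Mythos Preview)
Your proof is correct and follows essentially the same route as the paper: both arguments factor through the intermediate sets $\{\mathfrak P\in\Spec_k(T):\mathfrak P\cap U=\emptyset\}$ and $\{\pp\in\Spec(R):\pp\cap W=\emptyset\}$, using the realization correspondence (Theorem~\ref{Correspondence_Ideals_Preserves_Primes}) twice and the localization correspondence for prime $k$-ideals once. The paper simply cites \cite{Lorscheid_Notes}*{Proposition 2.7.5} for the bijection $\Spec_k(U^{-1}T)\simeq P$, whereas you verify the subtractivity of $U^{-1}\mathfrak P$ by hand; your final paragraph on $\zeta$ is additional bookkeeping not present in (and not required by) the paper's proof of the bare set bijection.
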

	\begin{proof}
		A point $\mathfrak{q}^\dag$ of $\Spec_k\left(U^{-1}\left(\id(R)^c\right)\right)$ corresponds to a unique prime $k$-ideal $\mathfrak{p}^\dag$ of $\id(R)^c$ such that $\mathfrak{p}^\dag \cap U = \emptyset$ (see \cite{Lorscheid_Notes}*{p.24, Proposition 2.7.5}). Applying Proposition \ref{Correspondence_Ideals_Preserves_Primes} twice, $\mathfrak{p}^\dag$ corresponds to a unique prime ideal $\pp$ of $R$ such that $\pp \cap W = \emptyset$ and then $\pp$ corresponds to a unique point $\pp^\dag \in \Spec_k\left( \id^c(W^{-1}R) \right)$.
	\end{proof}
	
	However, we stress that the map $\zeta : U^{-1}(\id(R)^c) \to \id(W^{-1}R)^c$ does not have to be an isomorphism. Said differently, the functor $\id^c$ does not commute with the localization functor in general.
	\begin{example}
		Since $\Z$ is Noetherian, $\id(\Z)^c = \id(\Z)$. One then sees that $\id(\Z)$ is isomorphic to the semiring $\N^{\gcd}$ from Example \eqref{Example_N}. With the bijection $\widetilde{u_\Z}$ we deduce that the elements of $\Spec_k(\N^{\gcd})$ are the ideal $0$ and, for each prime number $p$, the set $(p\Z)^\dagger = \{ m \in \N \mid p \mid m \}$.
		However,
		$$
		\id^c(\Z_\pp) =
		\begin{cases}
			\id^c(\Q) = \mathbb{B} &\text{if } \pp = 0,\\
			\id^c(\Z_{\langle p \rangle}) \simeq \{ n\Z : p \not | n \}&\text{if } \pp = p\Z,
		\end{cases}
		\text{ and }
		\N_{\pp^\dagger} =
		\begin{cases}
			\Q_{> 0} &\text{if } \pp = 0,\\
			\{ \frac{a}{b} \in \Q_{> 0} : p \not | b \} &\text{if } \pp = p\Z.
		\end{cases}
		$$
	\end{example}
	\begin{remark}\label{rem_homeo}
		It follows from Proposition \ref{Localization_Commutes_with_kPrimes_in_fgId} that for $f \in R$ we have a chain of homeomorphisms:
		\begin{equation}\label{Equation_Identification_Distinguished_Opens}
			Spec(R_f)\cong D(f)\cong D_k(f)\cong  Spec_k(\id(R)^c_{u_R(f)}).
		\end{equation}
		Likewise, if $I\subset R$ is an ideal, then we have a homeomorphism between the closed sets $V(I)\subset Spec(R)$ and $V_k(\langle u_R(I) \rangle_k)\subset Spec_k(\id(R)^c)$. In Corollary \ref{Corollary_of_Correspondence}, we show that \eqref{Equation_Identification_Distinguished_Opens} extends to a chain of homeomorphisms:
		\begin{equation}
			Spec(R/I)\cong V(I)\cong V_k(\langle u_R(I) \rangle_k)\cong  Spec_k(\id(R)^c/\langle u_R(I) \rangle_k).
		\end{equation}
	\end{remark}
	
	\begin{remark}\label{Remark_Preserving_Open_Cover}
		Let $R$ be a ring and let $\Sigma$ be a generating subset of $R$. Then, the set $\calU_\Sigma := \{ D(f) \mid f \in \Sigma \}$ is an affine open covering of $\Spec(R)$. Since $\widetilde{u_R} : \Spec(R) \to \Spec_k(\id(R)^c)$ is a homeomorphism and since by \eqref{Equation_Identification_Distinguished_Opens}, there is a homeomorphism $D(f) \to D_k(f)$ for each $f \in \Sigma$, then $\calU_{\idm(\Sigma)} := \{ D_k(f) \mid f \in \Sigma \}$ is an open covering of $\Spec_k(\id(R)^c)$.
	\end{remark}
	
	We now study the radical of $k$-ideals.
	\begin{definition}\label{definition_sring_radical}
		Let $I\in Id_k(\id(R)^c)$. The \textit{radical} $\sqrt{I}$ of $I$ is the set of $a \in \id(R)^c$ satisfying $a^n \in I$ for some integer $n \geq 1$. We say that $I$ is \textit{radical} if $I = \sqrt{I}$.
	\end{definition}
	\begin{proposition}
		\label{proposition_radical_ideals}
		If $I \in Id_k(\id(R)^c)$, then $\sqrt{I}$ is a $k$-ideal satisfying
		\begin{equation}
			\label{radicals}
			\bigcap_{\pp \in V_k(I)} \pp = \{ a \in \id(R)^c | a^n \in I \text{ for some integer } n \geq 1\}=\sqrt{I}.
		\end{equation}
		Also, the correspondence \eqref{Equation_Ordered_Set_and_Semiring_Isomorphism} preserves radical ideals.
	\end{proposition}
	\begin{proof}
		As in the case of rings, a proper $k$-ideal $I\subset \id(R)^c$ has a radical $\sqrt{I}$ which is also a $k$-ideal and coincides with the intersection of all the primes containing $I$. This follows at once from \cite{Fuchs_Reis_2003}*{Lemma 2.5} which says that $\sqrt{I}$ is the intersection of the minimal $k$-primes of $I$. Thus, \eqref{Equation_Ordered_Set_and_Semiring_Isomorphism} preserves radicals since it is a poset isomorphism. Note that $\sqrt{\id(R)^c} = \id(R)^c$ and $V_k(\id(R)^c) = \emptyset$. Hence $\cap_{\pp \in \emptyset} \pp = \id(R)^c$.
	\end{proof}
	
	\begin{corollary}\label{corollary_vsets_and_radicals}
		If $I, J \in Id_k(\id(R)^c)$, then $V_k(I) \subseteq V_k(J)$ if and only if $\sqrt{I} \supseteq \sqrt{J}$.
	\end{corollary}
	\begin{proof}
		By Proposition \ref{proposition_radical_ideals}, $\sqrt{I} = \bigcap_{\pp \in V_k(I)} \pp$ and $\sqrt{J} = \bigcap_{\pp \in V_k(J)} \pp$ and then the result follows.
	\end{proof}
	\begin{remark}
		Since the definition of {\it primary element} from \cite{Fuchs_Reis_2003}*{p. 346} coincides with Definition \ref{definition_sring_radical}, then the radical of a primary $k$-ideal is a prime $k$-ideal.
	\end{remark}
	
	\subsection{Retraction Theorems and Consequences}
	
	Before proving our retractions theorems and discussing some of their consequences, we introduce needed order theory notions. We follow \cite{Blyth_2005} and \cite{Dickmann_Schwartz_Tressl_2019}.
	
	\begin{definition}
		Let $(P, \leq)$ be an ordered set.
		\begin{enumerate}
			\item[(i)] A subset $Q \subseteq P$ is an \emph{up-set} (respectively, a \emph{down-set}) if given $q \in Q$ and $q \leq p$ (respectively, $q \in Q$ and $p \leq p$) we have $p \in Q$.
			\item[(ii)] A \emph{principal up-set} (respectively, \emph{principal down-set}) \emph{defined by an element $p \in P$} is an up-set (respectively, a down-set) of the form $P_{p \leq } = \{ q \in P \mid p \leq q\}$ (respectively, $P_{\leq p } = \{ q \in P \mid q \leq p \}$).
		\end{enumerate}
	\end{definition}
	
	\begin{definition}
		If $(A, \leq)$ and $(B, \leq')$ are ordered sets, then we say that a map $f: A \to B$ is \emph{order-preserving} if for all $x, y \in A$, $x \leq y$ implies $f(x) \leq' f(y)$.
	\end{definition}
	The following result is proven in \cite{Blyth_2005}*{p.6, Theorem 1.3}.
	\begin{theorem}
		If $(A, \leq)$ and $(B, \leq')$ are ordered sets, then the following conditions about a map $f: A \to B$ are equivalent.
		\begin{enumerate}
			\item[(1)] For any $y \in B$ there is an $x \in A$ such that $f^{-1}(B_{\leq' y}) = A_{\leq x}$.
			\item[(2)] The map $f$ is order-preserving and there is an order-preserving map $g: B \to A$ such that $\id_A \leq g \circ f$ and $f \circ g \leq' \id_B$.
		\end{enumerate}
	\end{theorem}
	
	\begin{definition}\label{def_poset_topology}
		The \emph{coarse lower topology} (respectively \emph{coarse upper topology}) on an ordered set $(P, \leq)$ is a topology that has the principal up-sets , (respectively, the principal down-sets) as a subbasis of closed sets.
	\end{definition}
	The coarse upper topology coincides with the coarse lower topology for the inverse order of $P$.
	
	\begin{definition}\label{def_congruence}
		Let $S$ be a semiring. An equivalence relation $\rho \subseteq S \times S$ is said to be a \emph{congruence relation} if given any $(r_1,r_2), (s_1,s_2) \in \rho$ we have $(r_1 + s_1, r_2 + s_2), (r_1 s_1, r_2 s_2) \in \rho$.
	\end{definition}
	
	The set $\Cong(S)$ of all congruence relations on $S$, can be seen as a poset with respect to the set inclusion. Let $Y$ be any non-empty subset of $\Cong(S)$. Define its greatest lower bound as the element $\wedge Y$ of $\Cong(S)$ by $(r_1,r_2) \in \wedge Y$ if $(r_1,r_2) \in \rho$ for all $\rho \in Y$ and its least upper bound as the element $\vee Y$ of $\Cong(S)$ defined by $(r_1, r_2) \in \vee Y$ if there exists a finite sequence of elements $r_1 = s_0, s_1, \cdots, s_n = r_2$ of $S$ and elements $\rho_1, \cdots, \rho_n \in Y$ such that $(s_{i-1},s_i) \in \rho_i$ for all $1 \leq i \leq n$. The set $\Cong(S)$ forms a complete lattice. In particular, the congruence relation generated by a set $X\subset S\times S$ always exists. We write $S/T$ for the set of equivalence classes. This set has a structure of semiring coming from the one of $S$. 
	
	\begin{lemma}\label{Tau_Congruence_is_k-ideal}
		If $T \in Cong\left( S \right)$, then $\mathfrak{r}(T) := \{ a \in S : (a,0)\in T \} \in
		id_k(S)$.
	\end{lemma}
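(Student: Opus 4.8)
The plan is to verify directly that $\mathfrak{r}(Y)$ satisfies all the clauses of the definition of a $k$-ideal (Definition \ref{def:ideals}), using only that $Y$ is an equivalence relation stable under the two semiring operations. First I would check that $\mathfrak{r}(Y)$ is an ideal. Reflexivity of $Y$ gives $(0,0) \in Y$, hence $0 \in \mathfrak{r}(Y)$. If $a, b \in \mathfrak{r}(Y)$, then $(a,0), (b,0) \in Y$, and stability under addition yields $(a+b,\, 0) \in Y$, so $a+b \in \mathfrak{r}(Y)$. If $a \in \mathfrak{r}(Y)$ and $c \in S$, then $(a,0) \in Y$ and $(c,c) \in Y$ by reflexivity, so stability under multiplication gives $(ac,\, 0) \in Y$, whence $ac \in \mathfrak{r}(Y)$. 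Thus $\mathfrak{r}(Y)$ is an ideal of $S$.

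Next I would establish the subtractive (i.e. $k$-ideal) property. Suppose $a + c = b$ with $a, b \in \mathfrak{r}(Y)$ and $c \in S$; the goal is $c \in \mathfrak{r}(Y)$. From $(a,0) \in Y$ and $(c,c) \in Y$, stability under addition gives $(a+c,\, c) = (b,\, c) \in Y$. Since $(b,0) \in Y$, symmetry yields $(0,b) \in Y$, and transitivity applied to $(0,b) \in Y$ and $(b,c) \in Y$ gives $(0,c) \in Y$; a final application of symmetry gives $(c,0) \in Y$, i.e. $c \in \mathfrak{r}(Y)$. This completes the verification.

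The proof is entirely formal and short; there is no real obstacle. The only mildly delicate point is the subtractive condition, where one must combine stability under addition (to move from $(a,0)$ to $(b,c)$ by adding the reflexive pair $(c,c)$) with symmetry and transitivity in order to cancel the term $b$. I note that no extra hypotheses on $S$, such as additive idempotency, are needed for this statement.
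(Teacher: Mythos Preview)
Your proof is correct and follows essentially the same direct-verification strategy as the paper. The only cosmetic difference is that you manipulate the congruence relation axioms (reflexivity, symmetry, transitivity, stability) explicitly, whereas the paper phrases the same computations via the quotient morphism $\pi : S \to S/Y$; in fact your treatment of the subtractive clause (for arbitrary $a+c=b$ with $a,b \in \mathfrak{r}(Y)$) is slightly more general than the paper's written argument, which only records the special case $a+c=a$.
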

	\begin{proof}
		That $\mathfrak{r}(T) \in \id(S)$ holds is because $T$ is a semiring. If $\overline{x}$ is the image of $x \in S$ in $S/T$, then for $a,b \in \mathfrak{r}(T)$ and $c \in S$ with $a + c = b$, we have $\overline{c} = \overline{a} + \overline{c} = \overline{a + c} = \overline{b} = \overline{0}$. Hence, $(c,0) \in T$, which concludes.
	\end{proof}
	Endowing $(Cong(S), \subseteq)$ and $(Id_k(S), \subseteq)$ with their coarse lower topologies, we consider the diagram
	\begin{equation}\label{eq:thm_ret_cong}
		\begin{tikzcd}
			\left( Id_k\left( S \right), \subseteq \right) \arrow[rr, "\mathfrak{c}", shift left=4] &  & \left( Cong\left( S \right), \subseteq \right) \arrow[ll, "\mathfrak{r}", shift left=2]
		\end{tikzcd}
	\end{equation}
	where $\frakc$ sends an $I \in Id_k\left( S \right)$ to the semiring congruence generated by $\{ (a, 0) \mid a \in I \}$.
	
	\begin{theorem}\label{Retraction_Theorem}
		The map $\mathfrak{r}$ of \eqref{eq:thm_ret_cong} is a retraction of topological spaces.
	\end{theorem}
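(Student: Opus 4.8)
The plan is to leverage the facts recorded immediately before the statement: the map $\frakc$ of \eqref{eq:thm_ret_cong} is a topological embedding (a homeomorphism onto its image) and $(\mathfrak{r}\circ\frakc)(I)=I$ for every $I\in Id_k(S)$. Under the identification of Convention \ref{convention_quotients}, this last identity says exactly that $\mathfrak{r}$ restricts to the identity on the subspace $\frakc(Id_k(S))\subseteq Cong(S)$; and since $\frakc$ is a homeomorphism onto that subspace, continuity of $\mathfrak{r}\colon Cong(S)\to Id_k(S)$ for the poset topologies is the same as continuity into the subspace. So the only thing that remains to be proved is that $\mathfrak{r}$ is continuous.

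To check continuity I would use that, by Definition \ref{def_poset_topology}, the poset topology on $Id_k(S)$ has as a subbasis of closed sets the principal upward closed sets $U(I)=\{\,J\in Id_k(S)\mid I\subseteq J\,\}$, with $I$ ranging over $Id_k(S)$. Since preimages commute with arbitrary intersections and with finite unions, it suffices to verify that $\mathfrak{r}^{-1}(U(I))$ is closed in $Cong(S)$ for every such $I$.

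The key computation is the identification of this preimage. By the definition $\mathfrak{r}(Y)=\{\,a\in S:(a,0)\in Y\,\}$, one has
\[
\mathfrak{r}^{-1}(U(I))=\{\,Y\in Cong(S)\mid I\subseteq\mathfrak{r}(Y)\,\}=\{\,Y\in Cong(S)\mid (a,0)\in Y\ \text{for all }a\in I\,\}.
\]
For a congruence $Y$, the condition that $(a,0)\in Y$ for all $a\in I$ is equivalent to $\{(a,0)\mid a\in I\}\subseteq Y$, and, because $\frakc(I)$ is by definition the congruence generated by that set, this is in turn equivalent to $\frakc(I)\subseteq Y$. Hence $\mathfrak{r}^{-1}(U(I))=\{\,Y\in Cong(S)\mid \frakc(I)\subseteq Y\,\}=U(\frakc(I))$, which is a principal upward closed subset of $Cong(S)$, hence a subbasic closed set for the poset topology. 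This shows that $\mathfrak{r}$ is continuous, and together with the first paragraph it shows that $\mathfrak{r}$ is a retraction of topological spaces onto $\frakc(Id_k(S))\cong Id_k(S)$.

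The argument is essentially formal, so I do not expect a genuine obstacle. The one point that needs a little care is the reduction to the subbasis together with the step that membership of all the generating pairs $(a,0)$, $a\in I$, in $Y$ forces $\frakc(I)\subseteq Y$; this is exactly where one uses that $Y$ is a congruence rather than an arbitrary subset of $S\times S$, and it is what makes the preimage of a subbasic closed set land on the nose on another subbasic closed set.
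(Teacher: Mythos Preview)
Your proof is correct and follows essentially the same approach as the paper: reduce to showing that $\mathfrak{r}$ is continuous, then check that the preimage of a subbasic closed set $U(I)$ is the subbasic closed set $U(\frakc(I))$. The only difference is that the paper cites \cite{Giansiracusa_Giansiracusa_2016}*{Proposition 2.4.4 (3)} for the identification $\mathfrak{r}^{-1}(U(I))=U(\frakc(I))$, whereas you supply the direct argument from the definitions of $\mathfrak{r}$ and $\frakc$; your version is self-contained but otherwise identical in structure.
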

	\begin{proof}
		From the definitions, if $I \in Id_k\left( S \right)$ then $(\mathfrak{r} \circ \frakc)(I) = I$ and if $T \in Cong(S)$, then $(\mathfrak{c} \circ \mathfrak{r})(T) \subset T$. In particular, $\mathfrak{c}$ is injective, and since $I\subseteq J$ in $\id_k(S)$ if and only if $\frakc(I)\subseteq \frakc(J)$ in $Cong(S)$, then $\mathfrak{c}$ is a homeomorphism onto its image. Finally, to prove the continuity of $\mathfrak{r}$ we can restrict to subbasic closed sets. If $I \in \id_k(S)$, then $\mathfrak{r}^{-1}(\id_k(S)_{I \subseteq}) = \Cong(S)_{\mathfrak{c}(I) \subseteq}$. We conclude by \cite{Giansiracusa_Giansiracusa_2016}*{Proposition 2.4.4 (3)}.
	\end{proof}
	
	\begin{convention}\label{convention_quotients}
		We now identify $\id_k(S)$ with its image $\mathfrak{c}(Id_k(S))\subset Cong(S)$ and write $S/I$ for $S/\frakc(I)$. \end{convention}
	
	\begin{corollary}\label{VI_is_SpeckR_over_I}
		For $I\in Id_k(S)$, there is prime-ideal preserving set bijection
		$$
		\id_k(S/I) \to \{J\in Id_k(S) \mid I\subset J\}.
		$$
	\end{corollary}
	\begin{proof}
		Consider the semiring morphism $\pi: S\to S/I$. If $J$ is a (prime) $k$-ideal of $S/I$, then $\pi^{-1}(J)$ is a (prime) $k$-ideal of $S$ containing $I$. This gives the inclusions
		$Id_k(S/I) \subseteq \{J\in Id_k(S) \mid I \subseteq J\}$ and $\Spec_k(S/I) \subseteq {V_k(I)}$.
		Conversely, Theorem \ref{Retraction_Theorem} gives a bijection that sends a $k$-ideal $J$ of $S$ containing $I$ to the $k$-ideal $\pi(J)$ of $S/I$. Verifying that if $J$ is prime, implies that $\pi(J)$ is prime, is straightforward.
	\end{proof}
	
	We now topologically compare ideals and $k$-ideals of a semiring $S$. Consider the diagram
	\begin{equation}\label{eq:thm_ret_ideal}
		\begin{tikzcd}
			(Id_k\left( S \right),\subseteq) \arrow[rr, "\mathfrak{i}", shift left=4] &  &(Id\left( S \right),\subseteq)  \arrow[ll, "\mathfrak{j}", shift left=2],
		\end{tikzcd}
	\end{equation}
	where $\mathfrak{i}$ is the inclusion map and $\mathfrak{j}:\id(S)\to\id_k(S)$ is the surjection that sends an ideal of $S$ to its $k$-closure.
	
	\begin{theorem}\label{Retraction_Theorem_ideales}
		If we endow the posets $(Id(S), \subseteq)$ and $(Id_k(S), \subseteq)$ with the coarse upper topology, then the map $\mathfrak{j}$ from \eqref{eq:thm_ret_ideal} is a retraction of topological spaces.
	\end{theorem}
	\begin{proof}
		The map $\mathfrak{i}$ is visibly continuous. It remains to show that $\mathfrak{j}$ is continuous which can be verified on the subbasic closed sets. Observe that $I \subseteq \mathfrak{j}(I)$, that if $I\subseteq J$ then $\mathfrak{j}(I)\subseteq \mathfrak{j}(J)$, and finally $\mathfrak{j}(\mathfrak{j}(I))=\mathfrak{j}(I)$. For each $J \in Id(S)$, we have $J\subseteq (\mathfrak{i} \circ \mathfrak{j})(J)$, and $I= (\mathfrak{j} \circ \mathfrak{i})(I)$ for each $I \in Id_k(S)$. Hence, by \cite{Blyth_2005}*{Theorem 1.3}, the inverse image under $\mathfrak{j}$ of a subbasic closed set is also a subbasic closed set.
	\end{proof}
	
	Specializing Theorem \ref{Retraction_Theorem} to $S = \id(R)^c$ identifies a $k$-ideal $I$ of $\id(R)^c$ with the semiring congruence $\mathfrak{c}(I)$.
	
	\begin{theorem}\label{cor_sch_str}
		For $T \in \id(R)$ with $I := \langle u_R(T) \rangle_k \in \id(R)$, the following diagram of topological spaces
		\[
		\begin{tikzcd}
			Id_k\left( \id^c(R/T)\right) \arrow[r, "\langle u_R(\cdot) \rangle_k"]               & Id_k(\id(R)^c/I)                            \\
			\mathrm{Spec}_k\left(\id^c(R/T)\right) \arrow[r] \arrow[u, hook] & \mathrm{Spec}_k(\id(R)^c/I), \arrow[u, hook]
		\end{tikzcd}
		\]
		commutes, where the horizontal maps are homeomorphisms and the vertical maps are continuous injections. In particular, the bottom horizontal map is the composition of homeomorphisms
		\[
		\begin{tikzcd}
			\mathrm{Spec}_k(\id^c(R/T)) \arrow[r] & \mathrm{Spec}(R/T) \arrow[r] & V(T) \arrow[r] & V_k(I) \arrow[r] & \mathrm{Spec}_k(\id(R)^c/I).
		\end{tikzcd}
		\]
	\end{theorem}
	\begin{proof}
		The map $\langle u_R(\cdot) \rangle_k$ is a homeomorphism with respect to the coarse upper topology. Note also that the set inclusion $\Spec_k(\id(R)^c)\hookrightarrow\id_k(\id(R)^c)$ is continuous. We have the following diagram of ordered set bijections:
		\begin{equation}
			\xymatrix{
				\{J\in Id_k(\id(R)^c) \mid I\subseteq J\}\ar[rr]^{\alpha}\ar[d]^-{\beta}&&Id_k(\id(R)^c/I)\\
				\{T'\in \id(R) \mid T \subseteq T' \}\ar[r]^-{\gamma}&Id(R/T)\ar[r]^-{\delta}&Id_k(\id^c(R/T)),\\}
		\end{equation}
		where $\alpha$ comes from Corollary  \ref{VI_is_SpeckR_over_I}, $ \beta$ and $\delta$ come from Theorem \ref{Correspondence_Modules_over_a_Ring} and $\gamma$ is the correspondence theorem for commutative rings. These maps all preserve primes, hence so does their composition.
	\end{proof}
	
	\begin{corollary}\label{Corollary_of_Correspondence}
		Let $R$ be a ring and let $I\subset R$ be an ideal. Let $R'=\id(R)^c$ and let $I'\subset R'$ be the corresponding $k$-ideal. Equipped with the Zariski topology, $\Spec_k(\id(R/I)^c)$ and $\Spec_k(R'/I')$ are homeomorphic.
	\end{corollary}
	\begin{proof}
		This is because the correspondences of Theorem \ref{cor_sch_str} respect prime ideals.
	\end{proof}
	
	\section{Idempotentization
		of Sheaves and Schemes}\label{Section_Idempotentization}
	
	\subsection{Summary}
	This section is about the idempotentization process and is structured as follows.
	In Subsection \ref{Subsection_Step1}, we define functors $\Phi$ that send a presheaf $\calF$ of rings or modules over a scheme $X$, to a presheaf $\Phi(\calF)$ of $\B$-algebras or of $\id^c(\calO_X)$-semimodules over $X$. Then, in Subsection \ref{Subsection_Step2}, we put structures of idempotent semirings and semimodules on the stalks of $\Phi(\calF)$. Since $\Phi(\calF)$ is not a sheaf in general, we sheafify it in Subsection \ref{Subsection_Step3}. For technical reasons, we now restrict to affine schemes $X = \Spec(R)$. In Subsection \ref{Subsection_Step4}, we use the homeomorphism $\widetilde{u_R} : \Spec(R) \to \Spec_k(\id(R)^c)$ and push forward the sheafified presheaf $\Phi(\calF)^\#$ to get a sheaf on $\Spec_k(\id(R)^c)$. This is the $R$-idempotentization of the presheaf $\calF$. Specializing to Noetherian rings $R$, we show that in this context $\Phi(\calO_X)$ is already a sheaf and that the $R$-idempotentization of $\calO_X$ preserves global sections. We also prove that $R$-idempotentization is functorial in $R$. We end this section by proving in Subsection \ref{Subsection_Step5} that idempotentization can be defined with respect to a covering by distinguished affine open subsets of a ring $R$ and prove the resulting sheaves do not depend on the choice of covering of $X$.
	
	The category of idempotent semirings is equivalent to the category $\mathbb{B}-\Alg$ of $\mathbb{B}$-algebras. We denote by $\SMod^{\text{idem}}$ the category of idempotent semimodules over arbitrary idempotent semirings. We write $PSh(X,\Ring)$ and $ PSh(X,\B-\Alg)$ (respectively $Sh(X,\Ring)$ and $Sh(X,\B-\Alg)$) for the categories of presheaves (respectively sheaves) on $X$ with values in $\Ring$ and in $\B-\Alg$. We also write $PMod(\calO_X)$ and $Mod(\calO_X)$ for the categories of presheaves and sheaves of $\calO_X$-modules on $X$.
	
	
	\subsection{From Commutative Sheaves to Semicommutative Idempotent Presheaves}\label{Subsection_Step1}
	Let $\mathrm{Open}(X)$ be the poset of open subsets of a topological space $X$ with respect to set inclusion. All sheaves we consider are such with respect to a Zariski topology.
	
	\begin{definition}\label{Definition_Idempotentization_Presheaf}
		Let $X$ be a scheme and let $\calF$ be a presheaf of sets on it. For each $U \in \mathrm{Open}(X)$, set
		$$
		\Phi( \calF )(U) :=
		\begin{cases}
			\text{semiring } \id^c(\calF(U)) &\text{ if } \calF \in PSh(X,\Ring),\\
			\id^c(\calO_X(U))-\text{semimodule } \Mod^c(\calF(U)) &\text{ if } \calF \in PMod(\calO_X),
		\end{cases}
		$$
		and for each $V \subseteq U$ in $\mathrm{Open}(X)$, if $\rho_{U,V} : \calF(U) \to \calF(V)$ denotes the restriction map, set
		$$
		\Phi(\rho_{U,V}) :=
		\begin{cases}
			\id^c(\rho_{U,V})  &\text{ if } \calF \in PSh(X,\Ring),\\
			\Mod^c(\rho_{U,V}) &\text{ if } \calF \in PMod(\calO_X).\\
		\end{cases}
		$$
		This defines a presheaf on $X$,
		$$
		\Phi(\calF) : \mathrm{Open}(X)^{\text{op}} \to \calC : V \mapsto \Phi(\calF(V)),
		$$
		where $\calC$ is the category
		$$
		\calC :=
		\begin{cases}
			\B-\Alg &\text{ if } \calF \in PSh(X,\Ring),\\
			\SMod^{\text{idem}} &\text{ if } \calF \in PMod(\calO_X).\\
		\end{cases}
		$$
	\end{definition}
	
	From Definition \ref{Definition_Idempotentization_Presheaf}, we obtain a functor $\Phi: \mathcal{A} \to \mathcal{B}$, with
	$$
	\Phi :=
	\begin{cases}
		\id^c &\text{ if } (\mathcal{A}, \mathcal{B} ) = ( PSh(X,\Ring), PSh(X,\B-\Alg)),\\
		\Mod^c &\text{ if } (\mathcal{A}, \mathcal{B} ) = ( PMod(\calO_X), PSMod(\id^c(\calO_X))),\\
	\end{cases}
	$$
	where $PSMod(\id^c(\calO_X))$ is the category of presheaves of $\id^c(\calO_X)$-semimodules on $X$.
	\begin{remark}\label{Remark_Categories_Well_Defined}
		The category $PSMod(\id^c(\calO_X))$ is well defined because $\id^c(\calO_X)$ is a presheaf of idempotent semirings and for each inclusion of subsets $V \subseteq U$ in $\mathrm{Open}(X)$, following the diagram is commutative
		\[
		\begin{tikzcd}
			\id^c(\mathcal{O}_X(U)) \times \Mod^c(\mathcal{F}(U)) \arrow[r] \arrow[d] & \Mod^c(\mathcal{F}(U)) \arrow[d] \\
			\id^c(\mathcal{O}_X(V)) \times \Mod^c(\mathcal{F}(V)) \arrow[r]           & \Mod^c(\mathcal{F}(V)),       
		\end{tikzcd}
		\]
		where the vertical maps are restriction maps and the horizontal ones are multiplication maps.
	\end{remark} 
	
	\subsection{Stalks of Idempotent Presheaves}\label{Subsection_Step2}
	
	A presheaf $\Phi(\calF)$ on $X$ needs not to be a sheaf in general and so we want to consider its sheafification. Because of this, we now define the stalks of $\Phi(\calF)$ and then show that the functor $\Phi$ commutes with stalks in Remark \ref{Phi_Commutes_with_Stalks}. The verification of the following lemma is immediate.
	
	\begin{lemma}\label{N-Alg_and_B-Alg_are_Complete_and_Cocomplete}
		The categories $\N-\Alg$ and $\mathbb{B}-\Alg$ are complete and cocomplete.
	\end{lemma}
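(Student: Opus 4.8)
The plan is to treat $\N-\mathrm{Alg}$ and $\mathbb{B}-\mathrm{Alg}$ as varieties of (commutative) algebras and to invoke --- or reprove by hand --- the standard fact that such categories are complete and cocomplete. Concretely, $\N$ is the initial object among commutative semirings with one, so $\N-\mathrm{Alg}$ is just the category of such semirings; likewise $\mathbb{B}$ is the initial idempotent semiring, so $\mathbb{B}-\mathrm{Alg}$ is the category of idempotent semirings (as already recorded in the excerpt). Both are defined by finitely many finitary operations ($0$, $1$, $+$, $\times$) subject to a set of equational axioms (the semiring axioms, together with $a+a=a$ in the idempotent case), hence each is the category of algebras for a finitary monad on $\Set$. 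For \textbf{completeness}: the forgetful functor $U$ to $\Set$ creates limits, because the operations and the defining equations can be imposed coordinatewise. Thus, given a diagram, one forms the limit of the underlying sets, equips it with the coordinatewise operations, and checks that the axioms --- and, in the idempotent case, idempotency --- hold coordinatewise. This produces all small limits.

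For \textbf{cocompleteness}, I would use that a complete category admitting all small coproducts and all coequalizers is cocomplete, and build those two constructions. \emph{Coequalizers}: given a parallel pair $f,g\colon S\rightrightarrows S'$ in $\N-\mathrm{Alg}$ (resp.\ $\mathbb{B}-\mathrm{Alg}$), the subset $\{(f(s),g(s)) : s\in S\}\subseteq S'\times S'$ generates a semiring congruence $Y$ --- this exists because $(Cong(S'),\subseteq)$ is a complete lattice, as recalled in the excerpt --- and the quotient map $S'\to S'/Y$ is the coequalizer; in the idempotent case $S'/Y$ is again idempotent, since a quotient of an idempotent semiring is idempotent. \emph{Coproducts}: the coproduct of a family $\{S_i\}_{i\in I}$ is built as a filtered colimit of finite tensor products over $\N$ (resp.\ over $\mathbb{B}$) of the $S_i$; one checks that this carries the structure of a semiring with the expected universal property, and that in the $\mathbb{B}$-case it is idempotent, since on simple tensors $(a\otimes b)+(a\otimes b)=(a+a)\otimes b=a\otimes b$ and every element is a finite sum of such, so additive idempotency propagates. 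Alternatively --- and this is the cleanest route --- one simply cites that the category of models of any finitary algebraic theory, equivalently the category of algebras for a finitary monad on $\Set$, is complete and cocomplete.

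The one point that needs a little care, and which I expect to be the main (mild) obstacle, is the construction of coequalizers and coproducts: unlike limits, these are \emph{not} computed on underlying sets, so one must produce the quotient by the generated congruence and the tensor product explicitly and verify their universal properties, and --- crucially for $\mathbb{B}-\mathrm{Alg}$ --- check that both constructions remain inside the subcategory of idempotent semirings. Since quotients and tensor products of idempotent semirings are idempotent by the short computations above, the full subcategory $\mathbb{B}-\mathrm{Alg}\subseteq\N-\mathrm{Alg}$ is closed under all colimits as well as all limits, so once $\N-\mathrm{Alg}$ is shown complete and cocomplete the same follows for $\mathbb{B}-\mathrm{Alg}$. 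This completes the argument.
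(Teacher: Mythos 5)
Your argument is correct, and it is in fact more complete than the one the paper gives. The paper's proof does not go through the general machinery of algebraic theories at all: it only writes down an inductive system over a preordered index set, realizes its colimit as $\coprod_i S_i/\sim$, realizes the limit of a projective system as the sub-semiring of $\prod_i S_i$ of compatible tuples, and checks in each case that additive idempotency survives; it implicitly leans on the ambient category of semirings being (co)complete and, strictly speaking, only treats (co)limits indexed by preordered sets --- which is all the paper ever uses (stalks and localizations are filtered colimits). Your route is the standard universal-algebra one: both categories are varieties for a finitary theory, the forgetful functor creates limits, and colimits are assembled from coequalizers (quotient by the generated congruence, which exists since $\mathrm{Cong}(S)$ is a complete lattice) and coproducts (filtered colimits of finite tensor products over $\N$, resp.\ $\mathbb{B}$). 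Your verifications that the idempotent subcategory is closed under these constructions are the right ones: quotients of idempotent semirings are idempotent, and in the tensor product $(a\otimes b)+(a\otimes b)=(a+a)\otimes b=a\otimes b$ together with the fact that a finite sum of additively idempotent elements in a commutative monoid is again idempotent gives idempotency of the coproduct. What each approach buys: the paper's is short and tailored to the specific (co)limits it needs later; yours actually proves the lemma as stated, at the cost of having to construct (or cite) tensor products of semirings and the coequalizer-plus-coproduct decomposition. Either is acceptable; if you want the minimal citation, the fact that the category of models of a finitary algebraic theory is complete and cocomplete covers both cases at once.
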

	
	\begin{definition}\label{Definition_Stalk_fg}
		Let $X$ be a scheme. For each $x$ in $X$ and each $\calF$ in $ PSh(X,\Ring)$, respectively in $PMod(\calO_X)$, we define the \textit{stalk of $\Phi(\calF)$ at $x$} to be the colimit
		$$
		\Phi(\calF)_x := \varinjlim_{x \in U \in \mathrm{Open}(X)^{\text{op}}} \Phi(\calF)(U).
		$$
	\end{definition}
	\begin{proposition}\label{Stalk_fg_Commutes}
		Let $X$ be a scheme. For each $x \in X$ and each $\calF$ in $ PSh(X,\Ring)$, respectively in $PMod(\calO_X)$, the set morphism
		$$
		\phi_x: \Phi(\calF)_x \to \Phi(\calF_x)
		$$
		given by sending a class $[(U, \sum_{i=1}^m \calF(U) \cdot s_i]$, with $U \subseteq X$ an open subset containing $x$ and sections $s_1, \cdots, s_m$ of $\calF(U)$, to $\sum_{i=1}^m \calF_x \cdot s_{i,x}$, is a bijection.
	\end{proposition}
	\begin{proof}
		It suffices to prove the proposition for the generators of the finitely generated ideals and submodules, respectively, as the arguments extend linearly (but with heavier notation). The proof is similar in both cases. We provide the details for $\calF \in PSh(X,\Ring)$ and leave the proof of the other statement to the reader.
		\item[(i)] The map $\phi_x$ is well-defined:
		Suppose that $[(U, \calF(U) \cdot s)] = [(V, \calF(V) \cdot t)]$. Then, there exists an open set $W$ contained in $U \cap V$ and containing $x$ such that
		$\calF(W) \cdot s|_W = \calF(W) \cdot t|_W$. Since, $s_x = (s|_W)_x$ and $t_x = (t|_W)_x$, then $\phi_x\left( [(U, \calF(U) \cdot s)] \right) = \phi_x\left( [(V, \calF(V) \cdot t)] \right)$.
		
		The map $\phi_x$ is surjective:
		Let $\calF_x \cdot \sigma$ for some $\sigma \in \calF_x$. Then, there exists an open subset $U$ of $X$ containing $x$ and a section $s \in \calF(U)$ such that $s_x = \sigma$ and so $\calF(U) \cdot s$ is a preimage of $\calF_x \cdot \sigma$ by $\phi_x$.
		
		The map $\phi_x$ is injective:
		Suppose that
		\begin{equation}\label{Equality_of_Stalks}
			\phi_x\left([(U, \calF(U) \cdot r)]\right) = \phi_x\left([(V, \calF(V) \cdot s)]\right) = \calF_x \cdot \sigma
		\end{equation}
		for some $\sigma \in \calF_x$. The element $\calF_x \cdot \sigma$ is represented on some open subset $W$ of $X$ containing $x$ by a section $t \in \calF(W)$. Therefore, $\calF_x \cdot t_x = \phi_x\left( [W, \calF(W) \cdot t ] \right) = \calF_x \cdot \sigma$.
		To conclude, we need to find an open subset $A \subseteq U \cap V \cap W$ containing $x$ such that $\calF(A) \cdot r|_A = \calF(A) \cdot s|_A = \calF(A) \cdot t|_A$. By \eqref{Equality_of_Stalks}, there exist $a, b, c, d \in \calF_x$ such that $a t_x = r_x$, $b t_x = s_x$, $c r_x = t_x$ and $d s_x = t_x$. Hence, there exists an open $A \subseteq U \cap V \cap W$ such that
		$\calF(A) \cdot r|_A, \calF(A) \cdot s|_A\subset \calF(A) \cdot t|_A$ as well as $\calF(A) \cdot t|_A \subseteq \calF(A) \cdot r|_A$ and $\calF(A) \cdot t|_A \subset \calF(A) \cdot s|_A$, which concludes.
	\end{proof}
	
	\begin{convention}\label{Convention_Staks}
		We use the inverse $\phi_x^{-1}$ of the bijection $\phi_X$ of Proposition \ref{Stalk_fg_Commutes} to make $\Phi(\calF)_x$ into an idempotent semiring when $\calF \in PSh(X,\Ring)$. If $\calF \in PMod(\calO_X)$, we denote by $\psi_x^{-1}$ the corresponding inverse bijection $\psi_x^{-1} : \Mod^c(\calF_x) \to \Mod^c(\calF)_x$ of Proposition \ref{Stalk_fg_Commutes}. With this map, $\Phi(\calF)_x$ is an idempotent monoid. Now, let $\mu(x)$ be the multiplication map for the $\id^c(\calO_{X,x})$-semimodule structure on $\Mod^c(\calF_x)$. Then, $\Phi(\calF)_x$ becomes an idempotent semimodule over $\id^c(\calF)_x$ by choosing for multiplication the map $\mu_x := \psi_x^{-1} \circ \mu(x) \circ (\phi_x, \psi_x)$. This is the unique map making the following diagram commutes.
		\begin{equation}\label{Presheaf_Induced_Semimodule_Structure}
			\begin{tikzcd}
				{\id^c(\mathcal{O}_{X,x}) \times \Mod^c(\mathcal{F}_x)} \arrow[rr, "{(\phi_x^{-1},\psi_x^{-1})}"] \arrow[d, "\mu(x)"'] &  & \id^c(\mathcal{O}_X)_x \times \Mod^c(\mathcal{F})_x \arrow[d, "\mu_x", dashed] \\
				\Mod^c(\mathcal{F}_x) \arrow[rr, "\psi_x^{-1}"]                                                                                &  & \Mod^c(\mathcal{F})_x                                                                 
			\end{tikzcd}
		\end{equation}
	\end{convention}
	
	\begin{remark}\label{Phi_Commutes_with_Stalks}
		Let $X$ be a scheme and $x \in X$. Given $\calF, \calG$ in $PSh(X,\Ring)$, respectively in $ PMod(\calO_X)$, and a morphism $f: \calF \to \calG$, we obtain a morphism $\Phi(f) : \Phi(\calF) \to \Phi(\calG)$ in $PSh(X,\B-\Alg)$, respectively in $PSMod(\id^c(\calO_X))$, where $\Phi(f)$ is defined for each open subset $U$ of $X$ by $\Phi(f)(U) := \Phi(f(U))$. From Proposition \ref{Stalk_fg_Commutes}, the diagrams
		\[
		\begin{tikzcd}
			{PSh(X,\mathrm{Ring})} \arrow[r, "\id^c"] \arrow[d, "(\cdot)_x"'] & {PSh(X,\mathbb{B}-\Alg)} \arrow[d, "(\cdot)_x"] & PMod(\mathcal{O}_X) \arrow[r, "\Mod^c"] \arrow[d, "(\cdot)_x"'] & PSMod(\id^c(\mathcal{O}_X) \arrow[d, "(\cdot)_x"]        \\
			\mathrm{Ring} \arrow[r, "\id^c"]                                & \mathbb{B}-\Alg                               & {\mathcal{O}_{X,x}-\mathrm{mod}} \arrow[r, "\Mod^c"]          & {\id^c(\mathcal{O}_{X,x})-\mathrm{SMod}^{\text{idem}}}
		\end{tikzcd}
		\]
		commute. We express this by saying that \textit{the functor $\Phi$ commutes with stalks}.
	\end{remark}
	
	\subsection{Sheafification of Idempotent Presheaves}\label{Subsection_Step3}
	
	We now sheafify the presheaf $\Phi(\calF)$.
	
	\begin{propositiondefinition}
		Let $X$ be a scheme and let $\calF \in \{ PSh(X,\Ring), PMod(\calO_X) \}$. The \textit{sheafification} $\Phi(\calF)^\sharp$ of $\Phi(\calF)$ is the presheaf defined, for each open subset $U$ of $X$, by
		$$
		\Phi(\calF)^\sharp(U) := \left\{ s : U \to \coprod_{\pp \in U} \Phi(\calF)_{\pp} \text{ such that } (*) \text{ holds}\right\}
		$$
		where (*) is that for each $s \in \Phi(\calF)(U)$ and each $\pp \in U$:
		\begin{enumerate}
			\item[(i)] $s(\pp) \in \Phi(\calF)_\pp$, and
			\item[(ii)] there is an open neighbourhood $V_\pp$ of $\pp$ in $U$ together with an element $t \in \Phi(\calF)(V_\pp)$, such that for all $\mathfrak{q} \in V_\pp$, the image $t_\mathfrak{q}$ of $t$ in $\Phi(\calF)_{\mathfrak{q}}$ equals $s(\mathfrak{q})$.
		\end{enumerate}
		
		For each inclusion of open subsets $V \subset U$ in $\mathrm{Open(X)}$ the projection map
		$$
		\prod_{u \in U} \Phi(\calF)_u \to \prod_{v \in V} \Phi(\calF)_v
		$$
		sends elements of $\Phi^\sharp(\calF)(U)$ into $\Phi^\sharp(\calF)(V)$. Taking these maps for restriction maps, we turn $\Phi(\calF)^\sharp$ into a presheaf of sets in the usual way (see \cite{stacks-project}*{Tag 007Y}) and the construction then satisfies the universal property of sheafification for sheaves of sets (see \cite{stacks-project}*{Tag 0080}).
	\end{propositiondefinition}
	This gives sheaves of sets. We now endow them with suitable idempotent algebraic structures, namely idempotent semirings and idempotent semimodules.
	\begin{proposition}\label{Proposition_Sheaf_With_Appropriate_Idempotent_Structure}
		Let $X$ be a scheme and let $\calF$ in $PSh(X,\Ring)$, respectively in $PMod(\calO_X)$. The sheaf of sets $\Phi(\calF)^\sharp$ can be turned into a sheaf of idempotent semirings, respectively into a sheaf of $\id^c(\calO_X)^\sharp$-modules, via the set of bijections $\{ \phi_x^{-1} : \Phi(\calF_x) \to \Phi(\calF)_x \mid x \in X\}$ and $\{ \psi_x^{-1} : \Mod^c(\calF_x) \to \Mod^c(\calF)_x \mid x \in X \}$ of Proposition \ref{Stalk_fg_Commutes} and Convention \ref{Convention_Staks}.
	\end{proposition}
	\begin{proof}
		Suppose that $\calF \in PSh(X,\Ring)$. By \cite{stacks-project}*{Tag 007Z} and Proposition \ref{Stalk_fg_Commutes}, we have bijections
		\begin{equation}\label{Stalk_Induced_Idempotent_Semiring_Structure}
			\id^c(\calF_x) \xrightarrow{\phi_x^{-1}} \id^c(\calF)_x \xrightarrow{\simeq} \id^c(\calF)_x^\sharp,
		\end{equation}
		which endow $\id^c(\calF)_x^\sharp$ with a structure of idempotent semiring. Now, Proposition \ref{Correspondence_Modules_over_a_Ring} gives a semiring isomorphism from $\id(\calF_x)$ to $\id_k(\id^c(\calF_x))$ . Since the ring $\calF_x$ is local, its unique maximal ideal corresponds to a unique maximal $k$-ideal of $\id^c(\calF_x)$. The semiring $\id^c(\calF_x)$ is therefore local.
		
		Next, for each open subset $U$ of $X$, we endow the set $\id^c(\calF)^\sharp(U)$ with a structure of idempotent semiring. For each $s = (s_u)_{u \in U}, t = (t_u)_{u \in U} \in \id^c(\calF)^\sharp(U)$, we set
		\begin{equation}\label{Equation_Operations}
			s + t := (s_u + t_u)_{u \in U} \text{ and } st := (s_u \cdot t_u)_{u \in U}.
		\end{equation}
		With \eqref{Equation_Operations}, $\id^c(\calF)^\sharp(U)$ is an idempotent semiring since this is true entrywise.
		
		Finally, we show that, for each inclusion $V \subseteq U$ of open subsets of $X$, the restriction map from $\id^c(\calF)^\sharp(U)$ to $\id^c(\calF)^\sharp(V)$ is a semiring morphism. Let $u \in U$. There exist inside $U$ open neighbourhoods $V_1$ and $V_2$ of $x$ such that there is a pair $(V_1,\sigma)$ with $\sigma = \langle s_1, \cdots, s_m \rangle \in \id^c(\calF(V_1))$ representing all stalks $s_y$ with $y \in V_1$ and a pair $(V_2,\tau)$ with $\tau = \langle t_1, \cdots, t_n \rangle \in \id^c(\calF(V_q))$ representing all stalks $t_z$ with $z \in V_2$. Let $V := V_1 \cap V_2$ of $x$. Then, $\sigma|_V,\tau|_V \in \id^c(\calF(V))$, $\sigma|_{V}+\tau|_V = \langle s_i|_V, t_j|_V, 1 \leq i \leq m, 1 \leq j \leq n \rangle$ and $\sigma|_{V} \cdot \tau|_V = \langle s_i|_V \cdot t_j|_V, 1 \leq i \leq m, 1 \leq j \leq n \rangle$. So, for each $v \in V$ we have
		$$
		(\sigma|_{V}+\tau|_V)_v = \langle s_{i,v}, t_{j,v}, 1 \leq i \leq m, 1 \leq j \leq n \rangle = (\sigma|_{V})_v + (\tau|_V)_v = s_v + t_v
		$$
		and
		$$
		(\sigma|_{V}\tau|_V)_v =\langle s_{i,v} \cdot t_{j,v} | 1 \leq i \leq m, 1 \leq j \leq n \rangle =(\sigma|_{V})_v \cdot (\tau|_V)_v =s_v \cdot t_v.
		$$
		
		Thus, the restriction map is a semiring morphism.
		
		Now suppose that $\calF \in PMod(\calO_X)$. The corresponding bijections
		\begin{equation}\label{Stalk_Induced_Idempotent_Semimodule_Structure}
			\Mod^c(\calF_x) \xrightarrow{\psi_x^{-1}} \Mod^c(\calF)_x \xrightarrow{\simeq} \Mod^c(\calF)_x^\sharp
		\end{equation}
		endow $\Mod^c(\calF_x)^\sharp$ with a structure of idempotent monoid. Combining \eqref{Presheaf_Induced_Semimodule_Structure} with Equations \eqref{Stalk_Induced_Idempotent_Semiring_Structure} and \eqref{Stalk_Induced_Idempotent_Semimodule_Structure}, we endow $\Mod^c(\calF)^\sharp_x$ with a structure of idempotent $\id^c(\calO_X)^\sharp_x$-semimodule. Given an open subset $U$ of $X$ and elements $s = (s_u)_{u \in U}$ of $\id^c(\calO_X)^\sharp(U)$ and $m = (m_u)_{u \in U}$ and $n = (n_u)_{u \in U}$ of $\Mod^c(\calF)^\sharp(U)$, we set
		$$
		m + n := (m_u + n_u)_{u \in U} \text{ and } sm := (s_u \cdot m_u)_{u \in U}.
		$$
		These operations give to $\Mod^c(\calF)^\sharp(U)$ a structure of idempotent $\id^c(\calO_X)^\sharp(U)$-semimodule. For each inclusion $V \subseteq U$ in $\mathrm{Open}(X)$, we see that the following diagram commutes.
		\[
		\begin{tikzcd}
			\id^{c,\sharp}(U)\times \Mod^{c,\sharp}(U) \arrow[rr] \arrow[d] &  & \Mod^{c,\sharp}(U) \arrow[d] \\
			\id^{c,\sharp}(V)\times \Mod^{c,\sharp}(V) \arrow[rr]           &  & \Mod^{c,\sharp}(V)          
		\end{tikzcd}
		\]
	\end{proof}
	\begin{remark}\label{Sheafification_is_a_Functor}
		The above constructions give functors
		$$
		\sharp : PSh(X,\mathbb{B}-\Alg) \to Sh(X, \mathbb{B}-\Alg) \text{ and } \sharp : PSMod(\id^c(\calO_X)) \to SMod(\id^c(\calO_X)^\sharp).
		$$
	\end{remark}
	
	\subsection{Idempotentization with respect to an affine scheme}\label{Subsection_Step4}
	
	Let $R$ be a ring. Recall that we have a functor $\id^c_R$ from $Sh(\Spec(R),\Ring)$ to $PSh(\Spec(R),\B-\Alg)$ and a functor $\sharp_R$ from $PSh(\Spec(R), \B-\Alg)$ to $Sh(\Spec(R),\B-\Alg)$. Moreover, the universal valuation $u_R$ induces a homeomorphism $\widetilde{u_R}$ from $\Spec(R)$ to $\Spec_k(\id(R)^c)$ which in turns induces a pushforward functor $\widetilde{u_R}_*$ which sends a sheaf $\calF$ of $\B$-algebras on $\Spec(R)$ to the sheaf $\widetilde{u_R}_*\calF$ of $\B$-algebras on $\Spec_k(\id(R)^c)$ whose sections on an open subset $V$ are given by $\calF(\widetilde{u_R}^{-1}(V))$. We write $\idm_R$ for the composite functor $\widetilde{u_R}_* \circ \sharp_R \circ \id_R^c$ from $Sh(\Spec(R),\Ring)$ to $Sh(\Spec_k(\id(R)^c),\B-\Alg)$.
	\begin{definition}
		The $R$-idempotentization of the affine scheme $(\Spec(R), \calO_{\Spec(R)})$ is the pair with topological space
		$\Spec_k(\id(R)^c)$ and structure sheaf $\calO_{\idm(\Spec(R))} := \idm_R(\calO_{\Spec(R)})$.
	\end{definition}
	The case where $R$ is Noetherian is particularly interesting since the structure sheaf recovers the global sections as we now show.
	The next lemma essentially holds because we consider a sheaf with values in a type of algebraic structure (in the sense of \cite{stacks-project}*{Tag 007M}) that is defined over a Noetherian scheme.
	\begin{lemma}\label{Lemma_Favourable_Conditions_Sheaf}
		Let $X = \Spec(R)$ be an affine scheme with $R$ a Noetherian ring. Then the presheaf of idempotent semirings $\id^c(\calO_X)$ on $X$ is a sheaf.
	\end{lemma}
	\begin{proof}
		Since $\mathbb{B}-\Alg$ together with the forgetful functor to $\Set$ is a type of algebraic structure then we can show the sheaf condition on a basis of the topological space $X$ (see \cite{stacks-project}*{Tag 009R}). We choose the basis $\mathcal{B} = \{ D(f), f \in R \}$. Let $D(f) \in \mathcal{B}$ and consider a covering $D(f) = \cup_{i \in I} D(f_i)$ and for each pair $(i,j) \in I \times I$, a covering $D(f_i f_j) = D(f_i) \cap D(f_j) = \cup_{k \in I_{i,j}} D(f_{ijk})$ with $D(f_{ijk}) \in \mathcal{B}$. Consider a collection of sections $\{ s_i \}_{i \in I}$ with $s_i \in \id^c( \calO_X(D(f_i))$ such that for each pair $(i,j) \in I \times I$ and each $k \in I_{i,j}$, we have
		\begin{equation}\label{Local_Equality}
			s_i|_{D(f_{ijk})} = s_j|_{D(f_{ijk})} \text{ in } \id^c(\calO_X(D(f_{ijk}))).
		\end{equation}
		Recall that if $A$ is a commutative ring, then $\id^c(A) = \Mod^c(A, A)$. Since $D(f)$ is an affine Noetherian scheme, the category $\calO_X(D(f))-\Mod$ is equivalent to the category of quasi-coherent $\calO_{D(f)}$-modules (see \cite{stacks-project}*{Tag 01IB}). The data of $\{ s_i \}_{i \in I}$ together with \eqref{Local_Equality} translates into a descent datum of quasi-coherent sheaves. Descent being effective we obtain, by equivalence of categories, a unique ideal $J$ of $\calO_X(D(f))$ whose restriction to each $\calO_{X}(D(f_i))$ is the ideal $s_i$. The corresponding sheaf of $\calO_X$-modules $\widetilde{J}$ on $D(f)$ is then coherent (see \cite{stacks-project}*{Tag 01XZ}). Therefore, $J$ is a finitely generated ideal, concluding the proof.
	\end{proof}
	
	Since $\id(R)^c = \id(R)$, the claimed results amount to show the following.
	\begin{theorem}\label{Theorem_Global_Sections}
		Let $R$ be a Noetherian ring. Then 
		$$
		\calO_{\idm(\Spec(R))}\left( \idm(\Spec(R))\right) = \id(R).
		$$
	\end{theorem}
	\begin{proof}
		By Lemma \ref{Lemma_Favourable_Conditions_Sheaf} and the fact that $\widetilde{u_R} : \Spec(R) \to \Spec_k(\id(R))$ is a homeomorphism, we have
		$$\calO_{\Spec_k(\id(R))}\left( \Spec_k(\id(R))\right) = \tilde{u}_{\Spec(R)*}\left( \id^c\left( \calO_{\Spec(R)}\right)\right)\left( \Spec_k\left( \id(R)\right)\right) = \id^c\left(\calO_{\Spec(R)}\right)\left( \Spec(R) \right) = \id(R).
		$$
	\end{proof}
	
	\begin{definition}
		The $R$-idempotentization of a sheaf of $\calO_{\Spec(R)}$-modules $\calF$ is the sheaf $\widetilde{u_R}_*(\Mod^c(\calF)^\#)$ of $\calO_{\idm(\Spec(R))}$-semimodules over $\Spec_k(\id(R)^c)$.
	\end{definition}
	This defines a functor
	$$
	\widetilde{u_R}_* \circ \#_R \circ \Mod^c : \mathrm{Mod}(\calO_{\Spec(R)}) \to \SMod(\idm(\calO_{\Spec(R)})).
	$$
	
	\begin{example}\label{Example_Sheaf_Reinterpretation_Rump_Yang} The semiring $\mathrm{Frac}(\id(R)^c)^\times$ from Example \ref{Example_Bezout} can turned  into a presheaf on $\Spec(R)$, denoted $\mathrm{Frac}(\id^c(\calO_{\Spec(R)}))^\times$. Its sheafification is denoted $G(\tilde{R})$ in \cite{Rump_Yang_2008}*{p.270} and can be identified, through the homeomorphism $\widetilde{u_R}$, with its idempotentization $\widetilde{u_R}_*G(\widetilde{R})$. On the Zariski topological space of $\ell$-ideals $\mathrm{Sp}(G)$ of $G(R)$, one can define a sheaf $\check{G}$. That sheaf is identified to $G(\tilde{R}$) because of \cite{Rump_Yang_2008}*{Proposition 7}), and hence to our idempotentization $\widetilde{u_R}_* G(\tilde{R})$.
	\end{example}
	
	Idempotentization is functorial in $R$. Indeed, if $f : R_2 \to R_1$ is a ring morphism, then $u_{R_1} \circ f = \id(f)^c \circ u_{R_2}$ (see Subsection \ref{Subsection_Universal_Valuation}). This yields a commutative diagram
	\[
	\begin{tikzcd}
		\mathrm{Spec}(R_1) \arrow[rr, "\mathrm{Spec}(f)"] \arrow[d, "\widetilde{u_{R_1}}"]  &  & \mathrm{Spec}(R_2) \arrow[d, "\widetilde{u_{R_2}}"] \\
		\mathrm{Spec}_k(\mathrm{Id}(R_1)^c) \arrow[rr, "\mathrm{Spec}_k(\mathrm{Id}^c(f))"] &  & \mathrm{Spec}_k(\mathrm{Id}(R_2)^c).                
	\end{tikzcd}
	\]
	Write $\varphi := \Spec(f)$ and $\varphi_k := \Spec_k(\id^c(f))$. For $\psi: \Spec_k(\id(R_1)^c) \to \Spec_k(\id(R_2)^c)$ continuous, a sheaf $\calG$ of $\B$-algebras on the former space is sent to the sheaf of $\B$-algebras $\psi_*\calG$ on the latter, who sections on every open subset $V$ of $\Spec_k(\id(R_2)^c)$ are defined to be $\psi_*\calG(V) = \calG( \psi^{-1}(V))$. Then $\idm(\varphi_*) = \varphi_{k,*}$ is an example of $\psi$ and one verifies that the following diagram commutes.
	\[
	\begin{tikzcd}
		{Sh(\mathrm{Spec}(R_1),\mathrm{Ring})} \arrow[rr, "\mathrm{Idm}_{R_1}"] \arrow[dd, "\varphi_*"] &  & {Sh(\mathrm{Spec}_k(\mathrm{Id}(R_1))^c,\mathbb{B}-\mathrm{Alg})} \arrow[dd, "\mathrm{idm}(\varphi_*)"] \\
		&  &                                                                                                         \\
		{Sh(\mathrm{Spec}(R_2),\mathrm{Ring})} \arrow[rr, "\mathrm{Idm}_{R_2}"]                         &  & {Sh(\mathrm{Spec}_k(\mathrm{Id}(R_2))^c,\mathbb{B}-\mathrm{Alg})}                                      
	\end{tikzcd}
	\]
	\subsection{Idempotentization with Respect to a Covering and Independence}\label{Subsection_Step5}
	
	Let $R$ be a ring and let $\Sigma$ be a generating subset of $R$. Then, the set $\calU_\Sigma := \{ D(f) \mid f \in \Sigma \}$ is an affine open covering of $\Spec(R)$ and this set is in bijection with the set $\calU_{\idm(\Sigma)} := \{ D_k(f) \mid f \in \Sigma \}$ because of \eqref{Equation_Identification_Distinguished_Opens}. To ease the arguments that come next, we identify $D(f)$ with $\Spec(R_f)$ and $D_k(f)$ with $\Spec_k(\id(R)^c_{u_R(f)})$. We now construct functors between descent categories.
	\begin{lemma}
		There is a functor $\rho_\Sigma: \mathrm{Desc}(\calU_\Sigma,\Ring) \to \mathrm{Desc}(\calU_{\idm(\Sigma)},\B-\Alg)$ and a functor $\psi_\Sigma :\mathrm{Desc}(\Mod(\calO_{\calU_{\Sigma}}) \to \mathrm{Desc}(\SMod(\calO_{\calU_{\idm(\Sigma})})$, where $\mathrm{Desc}(\calU_\Sigma,\Ring)$ and $\mathrm{Desc}(\SMod(\calO_{\calU_{\idm(\Sigma})})$ are the categories of descent data of $\calO_{\Spec(R)}$-modules and of $\calO_{\idm(\Spec(R))}$-semimodules, with respect to $\calU_\Sigma$.
	\end{lemma}
	\begin{proof}
		Let $(\calF_f, \phi_{f,g} \mid f,g \in \Sigma)$ be an object of $\mathrm{Desc}(\calU_\Sigma, \Ring)$. Then $\widetilde{u_{R_f}}_*\left( \id^c(\calF_f) \right)^\sharp$ is a sheaf of $\B$-algebras on $D_k(f)$, and an isomorphism  $\phi_{f,g} : \calF_f \mid_{D(fg)} \to \calF_g \mid_{D(fg)}$ of sheaves of rings on $D(fg)$ is transformed into an isomorphism $\idm_{f,g}(\phi_{f,g})$ of sheaves of $\B$-algebras on $D_k(fg)$ satisfying the cocycle condition
		$$
		\idm_{R_{fgh}}(\phi_{g,h}) \circ \idm_{R_{fgh}}(\phi_{f,g}) = \idm_{R_{fgh}}(\phi_{f,h})$$
		over $D_k(fgh)$. Now, suppose given a collection of maps $\theta_f : \calF_f \to \calG_f$ of sheaves of rings over $D(f)$, for varying $f \in \Sigma$ such that for each pair $(f,g)$ of elements of $\Sigma$ such that the restrictions of $\theta_f$ and $\theta_g$ to $D(fg)$ give the same map $\theta_{f,g} : \calF \mid_{D(fg)} \to \calG \mid_{D(fg)}$ of sheaves of rings of $D(fg)$. Then by functoriality, we associate a map $\idm_{R_f}(\theta_f)$ of sheaves of $\B$-algebras on $D_k(f)$ such that for any pair $(f,g)$ of elements of $\Sigma$ the restrictions of  $\idm_{R_f}(\theta_f)$ and $\idm_{R_g}(\theta_g)$ to $D_k(fg)$ give the same map $\idm_{R_{fg}}(\theta_{f,g})$ of sheaves of $\B$-algebras. This describes a functor $\rho_\Sigma$ as desired.
		
		The construction of the second functor is analogous: One first makes the construction for idempotent monoids and then verifies that the linearity of the action is preserved.
	\end{proof}
	
	\begin{remark}\label{On_Types_of_Algebraic_Structures}
		Let $\calC$ be a small category with limits and filtered colimits. Let $F: \calC \to \Set$ be the corresponding forgetful functor. Then, $F$ is immediately seen to be faithful, commuting with limits of $\calC$, commuting with filtered colimits of $\calC$ and reflecting isomorphisms. Therefore, a small category $\calC$ together with the forgetful set functor is a type of algebraic structure (in the sense of \cite{stacks-project}*{Tag 007L}) if and only if $\calC$ has limits and filtered colimits. In light of this, Lemma \ref{N-Alg_and_B-Alg_are_Complete_and_Cocomplete} implies that the pairs $(\B-\Alg,F)$ and $(\N-\Alg,F)$, with $F$ their corresponding forgetful set functor, are types of algebraic structures. Because of this and Remark \ref{Remark_Preserving_Open_Cover}, we have ``descent'' functors
		$$
		Sh(\Spec(R),\Ring) \to \mathrm{Desc}(\calU_\Sigma,\Ring) \text{ and } \mathrm{Desc}(\calU_{\idm(\Sigma)},\B-\Alg) \to Sh( \Spec_k(\id(R)^c), \B-\Alg)
		$$
		and these define equivalences between the corresponding categories (see \cite{stacks-project}*{Tag 00AN}).
	\end{remark}
	
	From Remark \ref{On_Types_of_Algebraic_Structures}, we also have descent functors
	$$
	\Mod(\calO_{\Spec(R)}) \to \mathrm{Desc}(\Mod(\calO_{\calU_{\Sigma}}) \text{ and } \mathrm{Desc}(\SMod(\calO_{\calU_{\idm(\Sigma})}) \to \SMod(\calO_{\idm(\Spec(R))})
	$$
	since idempotent monoids are a type of algebraic structure that is compatible with the structure of idempotent semiring acting on them. The compatibility of the semiring action is immediate.
	
	\begin{definition}
		If $R$ is a ring, then the idempotentization functor of sheaves of rings on $\Spec(R)$ with respect to $\calU_\Sigma$ is defined to be the composite of the above functors
		$$
		Sh(\Spec(R),\Ring) \xrightarrow{\simeq} \mathrm{Desc}(\calU_\Sigma,\Ring) \xrightarrow{\rho_\Sigma} \mathrm{Desc}(\calU_{\idm(\Sigma)},\B-\Alg) \xrightarrow{\simeq} Sh( \Spec_k(\id(R)^c), \B-\Alg).
		$$
		Similarly, the idempotentization functor of sheaves of $\calO_{\Spec(R)}$-modules on $\Spec(R)$ with respect to $\calU_\Sigma$ is defined to be the composite functor
		$$
		\Mod(\calO_{\Spec(R)}) \xrightarrow{\simeq} \mathrm{Desc}(\Mod(\calO_{\calU_{\Sigma}}) \xrightarrow{\psi_\Sigma} \mathrm{Desc}(\SMod(\calO_{\calU_{\idm(\Sigma})}) \xrightarrow{\simeq} \SMod(\calO_{\idm(\Spec(R))}).
		$$
	\end{definition}
	Given $\Sigma_1$ and $\Sigma_2$ two generating sets of $R$, we have the following commutative diagram
	\[
	\begin{tikzcd}
		{Sh(\mathrm{Spec}(R),\mathrm{Ring})} \arrow[d, equal] \arrow[r, "\cong"] & {\mathrm{Desc}(\mathcal{U}_{\Sigma_1},\mathrm{Ring})} \arrow[r, "\rho_{\Sigma_1}"] \arrow[d, "{\rho_{\Sigma_1,\Sigma_2}}"] & {\mathrm{Desc}(\mathcal{U}_{\mathrm{Idm}(\Sigma_1)},\mathbb{B}-\mathrm{Alg})} \arrow[r, "\cong"] \arrow[d, "{\rho_{\mathrm{Idm}(\Sigma_1),\mathrm{Idm}(\Sigma_2)}}"] & {Sh(\mathrm{Spec}_k(\mathrm{Id}(R)^c,\mathbb{B}-\mathrm{Alg})} \arrow[d, equal] \\
		{Sh(\mathrm{Spec}(R),\mathrm{Ring})} \arrow[r, "\cong"']                               & {\mathrm{Desc}(\mathcal{U}_{\Sigma_2},\mathrm{Ring})} \arrow[r, "\rho_{\Sigma_2}"]                                         & {\mathrm{Desc}(\mathcal{U}_{\mathrm{Idm}(\Sigma_2)},\mathbb{B}-\mathrm{Alg})} \arrow[r, "\cong"]                                                                     & {{Sh(\mathrm{Spec}_k(\mathrm{Id}(R)^c,\mathbb{B}-\mathrm{Alg})},}                            
	\end{tikzcd}
	\]
	where the map $\rho_{\Sigma_1,\Sigma_2}$ (respectively $\rho_{\idm(\Sigma_1),\idm(\Sigma_2)}$) sends the descent data of a sheaf of rings on $\Spec(R)$ on $\calU_{\Sigma_1}$ to the corresponding descent data on $\calU_{\Sigma_2}$ (respectively the descent data of a sheaf of $\B$-algebras on $\Spec_k(\id(R)^c)$ on $\calU_{\idm(\Sigma_1)}$ to the corresponding descent data on $\calU_{\idm(\Sigma_2)}$). As a consequence, idempotentization is independent of the choice of covering of $\Spec(R)$ by distinguished open affine subschemes. Analogously, we conclude to the independence of such coverings for sheaves of modules.
	
	\begin{remark}\label{Generalization_of_Construction}
		The reason why we restrict to covering by distinguished open affine subsets is the following. To apply the functor $\id^c$ in the category of descent data, we need that the intersection of affine subschemes is affine. This is not true in general. However, if $X$ is an affine scheme all whose local rings are Noetherian of dimension $0$ or $1$, then its quasi-compact open subsets are affine (see \cite{stacks-project}*{Tag 09N9}). Hence, for a Noetherian affine scheme of dimension $0$ or $1$, idempotentization is independent of any open affine covering of $X$. It would be interesting to extend the idempotentization process to arbitrary open coverings of affine schemes and all schemes in general.  We suspect that extension to locally ringed sites and sheaves of rings or modules over them might be possible (in light of \cite{stacks-project}*{Tags 00W1, 03CX and 04TP}). We leave these considerations for future works and point out that taking inspirations from \cite{Giansiracusa_Giansiracusa_2016},\cite{Giansiracusa_Giansiracusa_2022} and revisiting Yue Ren's question might lead to new developments.
	\end{remark}
	
	\section*{Acknowledgments}
	We would like to acknowledge the Pacific Institute for Mathematical Sciences (PIMS) for their support, the first author as a PIMS postdoctoral fellow at the University of Lethbridge, and the second author as a visiting researcher within the context of the Lethbridge Number Theory and Combinatorics Seminar. The authors also thank the Department of Mathematics and Computer Science of the University of Lethbridge for providing a stimulating work environment where part of this article was written. While a postdoctoral fellow at the Department of Mathematics of the Universit\'e du Luxembourg, the first author benefited from excellent work conditions that allowed him to carry out a thorough revision of the paper and improve its content and exposition. This research was funded in part, by the Luxembourg National Research Fund (FNR), grant reference 501100001866-17921905. For the purpose of open access, and in fulfilment of the obligations arising from the grant agreement, the authors have applied a Creative Commons Attribution 4.0 International (CC BY 4.0) license to any Author Accepted Manuscript version arising from this submission. We would like to thank Pedro Luis del \'Angel Rodr\'iguez, Matthew Baker and Yue Ren for inspiring conversations. We are grateful to Oliver Lorscheid for his observations and questions which helped us to correct some inaccuracies in previous versions of the text and motivated us to detail some arguments that improved the quality of the paper. The first author warmly thanks Kalina Mincheva and Jaiung Jun for providing detailed comments and relevant literature on this paper and also to the former for pointing out an issue with the notion realizable semiring. This feedback helped us to better situate our work within the existing literature and as well as more suitably spotlight our contributions. The second author would like to thank Lara Bossinger, Sebastian Falkensteiner, Mercedes Haiech and Marc Paul Noordman, for useful discussions at the beginning of the project.
	
	\begin{bibdiv}
		\begin{biblist}
			\bib{Anderson_1977}{article}{
				author = {Anderson, D. D.},
				title = {Fake rings, fake modules, and duality},
				journal = {J. Algebra},
				volume = {47},
				date = {1977},
				number = {2},
				pages = {425--432},
				review = {\MR{447208}},
				doi = {10.1016/0021-8693(77)90233-2}
			}
			\bib{Anderson_1989}{book}{
				author = {Anderson, Marlow},
				title = {Lattice-ordered groups of divisibility: an expository introduction},
				subtitle = {in Ordered algebraic structures ({C}ura\c{c}ao, 1988)},
				series = {Math. Appl.},
				volume = {55},
				date = {1989},
				pages = {3--9},
				publisher = {Kluwer Acad. Publ., Dordrecht},
				isbn = {0-7923-0489-6},
				review={\MR{1094822}},
			}
			\bib{Aroca_2010}{article}{
				author = {Aroca, Fuensanta},
				title = {Krull-tropical hypersurfaces},
				journal = {Ann. Fac. Sci. Toulouse Math. (6)},
				volume = {19},
				date = {2010},
				number = {3-4},
				pages = {525--538},
				review={\MR{2790807}},
				url={http://afst.cedram.org/item?id=AFST\_2010\_\_19_3-4_525\_0}
			}
			\bib{Birkhoff_1948}{book}{
				author = {Birkhoff, Garrett},
				title = {Lattice Theory},
				series = {American Mathematical Society Colloquium Publications},
				volume = {Vol. 25},
				publisher = {American Mathematical Society, New York},
				date = {1948},
				pages= {xiii+283},
				review={\MR{29876}}
			}
			\bib{Berkovich_1990}{book}{
				author = {Berkovich, Vladimir G.},
				title = {Spectral theory and analytic geometry over non-Archimedean fields},
				series = {Mathematical Surveys and Monographs},
				volume = {33},
				publisher = {American Mathematical Society, Providence, RI},
				date = {1990},
				pages={x+169},
				isbn={0-8218-1534-2},
				review={\MR{1070709}},
				doi={10.1090/surv/033}
			}
			\bib{Blyth_2005}{book}{
				author = {Blyth, T. S.},
				title = {Lattices and ordered algebraic structures},
				series = {Universitext},
				publisher = {Springer-Verlag London, Ltd., London},
				date = {2005},
				pages = {x+303},
				isbn={1-85233-905-5},
				review={\MR{2126425}},
				doi={10.1007/b139095}
			}
			\bib{Dickmann_Schwartz_Tressl_2019}{book}{
				author = {Dickmann, Max},
				author = {Schwartz, Niels},
				author = {Tressl, Marcus},
				title = {Spectral spaces},
				series = {New Mathematical Monographs},
				volume = {35},
				publisher = {Cambridge University Press, Cambridge},
				date = {2019},
				pages = {xvii+633},
				isbn = {978-1-107-14672-3},
				review={\MR{3929704}},
				doi={10.1017/9781316543870}
			}
			\bib{Friedenberg_Mincheva_2024}{webpage}{
				author={Friedenberg, Netanel},
				author={Mincheva, Kalina},
				title={Tropical adic spaces I : The Continuous Spectrum of a Topological Semiring},
				url={https://arxiv.org/pdf/2209.15116},
				year={2024}
			}
			\bib{Fuchs_Reis_2003}{article}{
				author={Laszlo Fuchs},
				author={Raquel Reis},
				title={On lattice-ordered commutative semigroups},
				journal={Algebra Universalis},
				volume={50},
				date={2003},
				number={3-4},
				pages={341–-357},
				review={\MR{2055062}},
				doi={10.1007/s00012-003-1845-x}
			}
			\bib{Fujiwara_Kato_2018}{book}{
				author={Fujiwara, Kazuhiro},
				author={Kato, Fumiharu},
				title={Foundations of rigid geometry. I.},
				series={EMS Monographs in Mathematics},
				publisher={European Mathematical Society (EMS), Z\"urich},
				date={2018},
				pages={xxxiv+829},
				isbn={978-3-03719-135-4},
				review={\MR{3752648}},
				doi={10.4171/135}
			}
			\bib{Giansiracusa_Giansiracusa_2016}{article}{
				author={Giansiracusa, Jeffrey},
				author={Giansiracusa, Noah},
				title={Equations of tropical varieties},
				journal={Duke Math. J.},
				volume={165},
				date={2016},
				number={18},
				pages={3379–3433},
				review={\MR{3577368}},
				doi={10.1215/00127094-3645544}
			}
			\bib{Giansiracusa_Giansiracusa_2022}{article}{
				author={Giansiracusa, Jeffrey},
				author={Giansiracusa, Noah},
				title={The universal tropicalization and the Berkovich analytification},
				journal={Kybernetika (Prague)},
				volume={58},
				date={2022},
				number={5},
				pages={790--815},
				review={\MR{4538626}},
				doi={10.14736/kyb-2022-5-0790}
			}
			\bib{Golan_1999}{book}{
				author={Golan, Jonathan S.},
				title={Semirings and their applications},
				publisher = {Kluwer Academic Publishers, Dordrecht},
				date={1999},
				pages={xii+381},
				isbn={0-7923-5786-8 },
				review={\MR{1746739 }},
				doi={10.1007/978-94-015-9333-5}
			}			
			\bib{Jun_2015}{thesis}{
				author={Jun, Jaiung},
				title={Algebraic Geometry over Semi-structures and Hyper-structures of Characteristic One},
				type={Ph.D. Thesis},
				address={Johns Hopkins University, Baltimore, Maryland, USA},
				year={May 2015}
			}
			\bib{Jun_Ray_Tolliver_2022}{article}{
				author = {Jun, Jaiung},
				author = {Ray, Samarpita},
				author = {Tolliver, Jeffrey},
				title = {Lattices, spectral spaces, and closure operations on idempotent semirings},
				journal = {J. Algebra},
				volume = {594},
				pages = {313--63},
				date = {2022},
				review = {\MR{4353827}},
				doi = {10.1016/j.jalgebra.2021.12.007}
			}
			\bib{Jun_Mincheva_Tolliver_2024}{article}{
				author = {Jun, Jaiung},
				author = {Mincheva, Kalina},
				author = {Tolliver, Jeffrey},
				title = {Vector bundles on tropical schemes},
				journal = {J. Algebra},
				volume = {637},
				pages = {1--46},
				date = {2024},
				review = {\MR{4646329}},
				doi = {10.1016/j.jalgebra.2023.08.033}
			}
			\bib{Kajiwara_2008}{book}{
				author={Kajiwara, Takeshi},
				title = {Tropical toric geometry},
				subtitle = {in Toric topology},
				series = {Contemp. Math.},
				volume = {460},
				pages = {197--207},
				publisher= {Amer. Math. Soc., Providence, RI},
				date = {2008},
				isbn = {978-0-8218-4486-1},
				review={\MR{2428356}},
				doi={10.1090/conm/460/09018}
			}
			\bib{Izhakian_Rowen_2016}{article}{
				author={Izhakian, Zur},
				author={Rowen, Louis},
				title = {Congruences and coordinate semirings of tropical varieties},
				journal = {Bull. des Sci. Math.},
				volume = {140},
				number = {3},
				pages = {231-259},
				year = {2016},
				review = {\MR{3473388}},
				doi = {10.1016/j.bulsci.2015.12.001}
			}
			\bib{Lescot_2012}{article}{
				author = {Lescot, Paul},
				title = {Absolute algebra III—The saturated spectrum},
				journal = {J. Pure Appl. Algebra},
				volume = {216},
				number = {5},
				pages = {1004-1015},
				date = {2012},
				review = {\MR{2875323}},
				doi = {https://doi.org/10.1016/j.jpaa.2011.10.031}
			}
			\bib{Lorscheid_Notes}{webpage}{
				author={Lorscheid, Oliver},
				title={Blueprints and tropical scheme theory. Lecture notes},
				url={https://oliver.impa.br/notes/2018-Blueprints/versions/lecturenotes180521.pdf},
				year={Version of May 21, 2018}
			}
			\bib{Lorscheid_2023}{article}{
				author = {Lorscheid, Oliver},
				title = {A unifying approach to tropicalization},
				journal = {Trans. Amer. Math. Soc.},
				volume = {376},
				date = {2023},
				number = {5},
				pages = {3111-3189},
				issn = {0002-9947,1088-6850},
				review={\MR{4577330}},
				doi = {10.1090/tran/8870}
			}
			\bib{Macpherson_2020}{article}{
				author={Macpherson, Andrew W.},
				title={Skeleta in non-Archimedean and tropical geometry},
				journal={Ann. Fac. Sci. Toulouse Math.},
				volume={XXIX},
				series={6},
				number={2},
				pages={431-506},
				year={2020},
				review={\MR{4150548}},
				doi={10.5802/afst.1637}
			}
			\bib{Maclagan_Rincon_2018}{article}{
				author = {Maclagan, Diane},   
				author =  {Rinc\'{o}n, Felipe},
				TITLE = {Tropical ideals},
				journal= {Compos. Math.},
				volume = {154},
				year = {2018},
				number = {3},
				pages = {640--670},
				review = {\MR{3778187}},
				doi = {10.1112/S0010437X17008004}
			}
			\bib{Payne_2009}{article}{
				author={Payne, Sam},
				title={Analytification is the limit of all tropicalizations},
				journal={Math. Res. Lett.},
				volume={16},
				number={3},
				pages={543-556},
				year={2009},
				review={\MR{2511632}},
				doi={10.4310/MRL.2009.v16.n3.a13}
			}
			\bib{Rump_Yang_2008}{article}{
				author={Rump, Wolfgang},
				author={Yang, Yi Chuan},
				title={Jaffard-Ohm correspondence and Hochster duality},
				journal={Bull. Lond. Math. Soc.},
				volume={40},
				date={2008},
				number={2},
				pages={263–273},
				review={\MR{2414785}},
				doi={10.1112/blms/bdn006}
			}
			\bib{stacks-project}{webpage}{
				author={The Stacks Project authors},
				title={The Stacks Project},
				url={https://stacks.math.columbia.edu},
				year={2023}
			}
		\end{biblist}
	\end{bibdiv}
\end{document}